\newcommand{\comment}[1]{}
\newcommand{\be}{\begin{equation}}
\newcommand{\bel}[1]{\begin{equation}\label{#1}}
\newcommand{\qe}{\end{equation}}
\newcommand{\ee}{\end{equation}}
\newcommand{\eeq}{\end{equation}}
\newcommand{\ba}{\begin{eqnarray}}
\newcommand{\ea}{\end{eqnarray}}
\numberwithin{equation}{section}
\theoremstyle{plain}
\newtheorem{theorem}{Theorem}[section]
\theoremstyle{definition}
\newtheorem{definition}{Definition}[section]   
\theoremstyle{plain}   
\newtheorem{lemma}{Lemma}[section]
\theoremstyle{plain}
\newtheorem{proposition}{Proposition}[section]         
\theoremstyle{plain}
\newtheorem{corollary}{Corollary}[section]
\theoremstyle{definition}   
\newtheorem{remark}{Remark}[section]
\theoremstyle{definition} 
\newtheorem{example}{Example}[section]
\newcommand{\R}{{\mathbb R}}
\newcommand{\F}{{\mathbb F}}
\newcommand{\N}{{\mathbb N}}
\newcommand{\A}{{\mathcal A}}
\newcommand{\Mm}{{\mathcal M}}    
\newcommand{\Pp}{{\mathcal P}}
\newcommand{\Ss}{{\mathcal S}}
\newcommand{\Om}{{\Omega}}
\newcommand{\om}{{\omega}}
\newcommand{\vol}{{\rm vol}}
\newcommand{\pb}{{\mathbf p}}
\newcommand{\la}{\langle}
\newcommand{\ra}{\rangle}
\newcommand{\p}{{\partial}}
\newcommand{\sign}{\mbox{sign}}  
\renewcommand{\span}{\text{\rm span}}
\newcommand{\g}{{\mathfrak g}} 
\newcommand{\T}{{\mathbf T}}
\definecolor{brown}{RGB}{150,100,0}
\renewcommand{\sign}{{\rm sign}\,}
\begin{document}

\title{Congruent families and invariant tensors}
\author{Lorenz Schwachh\"ofer, 
Nihat Ay, J\"urgen Jost, H\^ong V\^an L\^e}
\address{L. Schwachh\"ofer, TU Dortmund University, Dortmund, Germany, lschwach@math.tu-dortmund.de 
\newline
N. Ay, J. Jost, Max-Planck-Institute for Mathematics in the Sciences, Leipzig, Germany, nay@mis.mpg.de, jjost@mis.mpg.de
\newline
H.V. L\^e, Academy of Sciences of the Czech Republic, Prague, hvle@math.cas.cz}

\keywords{Chentsov's theorem, sufficient statistic, congruent Markov kernel, statistical model}
\subjclass[2010]{primary: 62B05, 62B10, 62B86, secondary: 53C99}
  
\begin{abstract}
Classical results of Chentsov and Campbell state that -- up to constant multiples -- the only $2$-tensor field of a statistical model which is  invariant under congruent Markov morphisms is  the Fisher metric  and the only invariant $3$-tensor field is the Amari-Chentsov tensor. We generalize this result for arbitrary degree $n$, showing that any family of $n$-tensors which is invariant under congruent Markov morphisms is algebraically generated by the canonical tensor fields defined in \cite{AJLS2015}.
\end{abstract}
            
\maketitle

\section{Introduction} \label{sec:intro}

The main task of {\em Information geometry }is to use differential geometric methods in probability theory in order to gain insight into the structure of families of probability measures or, slightly more general, finite measures on some (finite or infinite) sample space $\Om$. In fact, one of the key themes of differential geometry is to identify quantities that do not depend on how we parametrize our objects, but that depend only on their intrinsic structure. And since in information geometry, we not only have the structure of the parameter space, the classical object of differential geometry, but also the sample space on which the probability measures live, we should also look at invariance properties with respect to the latter. That is what we shall systematically do in this contribution.

When parametrizing such a family by a manifold $M$, there are two classically known symmetric tensor fields on the parameter space $M$. The first is a quadratic form (i.e., a Riemannian metric), called the {\em Fisher metric $\g^F$}, and the second is a $3$-tensor, called the {\em Amari-Chentsov tensor $\T^{AC}$}. The Fisher metric was first suggested by Rao \cite{Rao1945}, followed by Jeffreys \cite{Jeffreys1946}, Efron \cite{Efron1975} and then systematically developed by Chentsov and Morozova \cite{Chentsov1965}, \cite{Chentsov1978} and \cite{MC1991}; the Amari-Chentsov tensor and its significance was discovered by Amari \cite{Amari1980}, \cite{Amari1982} and Chentsov \cite{Chentsov1982}. If the family is given by a positive density function $\pb(\xi) = p(\cdot; \xi) \mu$ w.r.t. some fixed background measure $\mu$ on $\Om$ and $p: \Om \times M \to (0, \infty)$ differentiable in the $\xi$-direction, then the score
  \begin{equation}\label{score}
\displaystyle{\int_\Om \p_V \log p(\cdot;\xi)\;  d\pb(\xi)} =0
\end{equation}
vanishes, while the Fisher metric $\g^F$ and the Amari-Chentsov tensor $\T^{AC}$ associated to a parametrized measure model are given by
\begin{equation} \label{Fisher-AC}
\begin{array}{lll}
\g^F(V, W) & := & \displaystyle{\int_\Om \p_V \log p(\cdot;\xi)\; \p_W \log p(\cdot;\xi)\; d\pb(\xi)}\\
\T^{AC}(V, W, U) & := & \displaystyle{\int_\Om \p_V \log p(\cdot;\xi)\; \p_W \log p(\cdot;\xi)\; \p_U \log p(\cdot;\xi)\; d\pb(\xi)}.
\end{array}
\end{equation}
Of course, this naturally suggests to consider analogous tensors for arbitrary degree $n$.
The tensor fields in \eqref{Fisher-AC} have some remarkable properties. On the one hand, they may be defined independently of the particular choice of a parametrization and thus are naturally defined from the differential geometric point of view. Their most important property from the point of view of statistics is that these tensors are invariant under sufficient statistics or, more general by congruent Markov morphisms. In fact, these tensor fields are characterized by this invariance property. This was shown in the case of finite sample spaces by Chentsov in \cite{Chentsov1978} and for an arbitrary sample space by the authors of the present article in \cite{AJLS}.

The question addressed in this article is to classify all tensor fields which are invariant under sufficient statistics and congruent Markov morphisms. In order to do this, we first have to make this invariance condition precise.

Observe that both \cite{Chentsov1978} and \cite{AJLS} require the family to be of the form $\pb(\xi) = p(\cdot; \xi) \mu$ with $p > 0$, which in particular implies that all these measures are equivalent, i.e., have the same null sets. Later, in \cite{AJLS2015} and \cite{AJLS2017}, the authors of this article introduced a more general notion of a {\em parametrized measure model }as a map $\pb: M \to \Mm(\Om)$ from a (finite or infinite dimensional) manifold $M$ into the space $\Mm(\Om)$ of finite measures which is continuously Fr\'echet-differentiable when regarded as a map into the Banach lattice $\Ss(\Om) \supset \Mm(\Om)$ of {\em signed } finite measures. Such a model neither requires the existence of a measure dominating all measures $\pb(\xi)$, nor does it require all these measures to be equivalent.

Furthermore, for each $r \in (0,1]$ there is a well defined Banach lattice $\Ss^r(\Om)$ of {\em $r$-th powers of finite signed measures}, whose nonnegative elements are denoted by $\Mm^r(\Om) \subset \Ss^r(\Om)$, and for each integer $n \in \N$, there is a {\em canonical $n$-tensor }on $\Ss^{1/n}(\Om)$ given by
\begin{equation} \label{Ln-def}
L_n^\Om(\nu_1, \ldots, \nu_n) := n^n \ (\nu_1 \cdots \nu_n)(\Om),
\end{equation}
where $\nu_1 \cdots \nu_n \in \Ss(\Om)$ is a signed measure. The multiplication on the right hand side of (\ref{Ln-def}) refers to the multiplication of roots of measures, cf. \cite[(2.11)]{AJLS2015}, see also (\ref{eq:prod-meas}). A parametrized measure model $\pb: M \to \Mm(\Om)$ is called {\em $k$-integrable }for $k \geq 1$ if the map
\[
\pb^{1/k}: M \longrightarrow \Mm^{1/k}(\Om) \subset \Ss^{1/k}(\Om), \qquad \xi \longmapsto \pb(\xi)^{1/k}
\]
is continuously Fr\'echet differentiable, cf. \cite[Definition 4.4]{AJLS2015}. In this case, we define the {\em canonical $n$-tensor of the model }as the pull-back $\tau^n_{(M, \Om, \pb)} := (\pb^{1/n})^\ast L_n^\Om$ for all $n \leq k$. If the model is of the form $\pb(\xi) = p(\cdot; \xi) \mu$ with a positive density function $p > 0$, then
\begin{equation} \label{tau-n}
\tau^n_{(M, \Om, \pb)} (V_1, \ldots, V_n) := \int_\Om \p_{V_1} \log p(\cdot;\xi)\; \cdots \p_{V_n} \log p(\cdot;\xi)\; d\pb(\xi),
\end{equation}
so that $\g^F = \tau^2_{(M, \Om, \pb)}$ and $\T^{AC} = \tau^3_{(M, \Om, \pb)}$ by (\ref{Fisher-AC}). The condition of $k$-integrability ensures that the integral in (\ref{tau-n}) exists for $n \leq k$.

A Markov kernel $K: \Om \to \Pp(\Om')$ induces a bounded linear map $K_\ast: \Ss(\Om) \to \Ss(\Om')$, called the {\em Markov morphism associated to $K$}. This Markov kernel is called {\em congruent}, if there is a statistic $\kappa: \Om' \to \Om$ such that $\kappa_\ast K_\ast \mu = \mu$ for all $\mu \in \Ss(\Om)$. 

We may associate to $K$ the map $K_r: \Ss^r(\Om) \to \Ss^r(\Om')$ by $K_r(\mu_r) = (K_\ast(\mu_r^{1/r}))^r$, where $\mu_r^{1/r} \in \Ss(\Om)$. While $K_r$ is not Fr\'echet differentiable in general, we still can define in a natural way the {\em formal differential }$dK_r$ and hence the pullback $K_r^\ast \Theta^n_{\Om;r}$ for any covariant $n$-tensor on $\Ss^r(\Om')$ which yields a covariant $n$-tensor on $\Ss^r(\Om)$.

It is not hard to show that for the canonical tensor fields we have the identity $K_{1/n}^\ast L_n^{\Om'} = L_n^\Om$ for any congruent Markov kernel $K: \Om \to \Pp(\Om')$, whence we may say that the canonical $n$-tensors $L_n^\Om$ on $\Ss^{1/n}(\Om)$ form a {\em congruent family}. Evidently, any tensor field which is given by linear combinations of tensor products of canonical tensors and permutations of the argument is also a congruent family, and the families of this type are said to be {\em algebraically generated by $L^n_\Om$}.

Our main result is that these exhaust the possible invariant families of covariant tensor fields:

\begin{theorem} \label{Maintheorem} Let $(\Theta^n_{\Om;r})$ be a family of covariant $n$-tensors on $\Ss^r(\Om)$ for each measurable space $\Om$. Then this family is invariant under congruent Markov morphisms if and only if it is algebraically generated by the canonical tensors $L_m^\Om$ with $m \leq 1/r$.

In particular, on each $k$-integrable parametrized measure model $(M, \Om, \pb)$ any tensor field which is invariant under congruent Markov morphisms is algebraically generated by the canonical tensor fields $\tau_{(M, \Om, \pb)}^m$, $m \leq k$.
\end{theorem}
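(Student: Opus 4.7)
The implication from algebraic generation to congruence invariance is immediate from the identity $K_{1/m}^\ast L_m^{\Om'} = L_m^\Om$ noted before the theorem, together with the fact that the formal pullback along $K_r$ commutes with tensor products, linear combinations, and permutation of arguments. The substance is the converse, which I would prove in three stages, followed by the deduction of the consequence for parametrized models.

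The first stage is a reduction to finite sample spaces. For any finite measurable partition $\Om = A_1 \sqcup \cdots \sqcup A_N$ and any choice of probability measures $K_i$ supported on $A_i$, the statistic $\pi\colon \Om \to \{1,\ldots,N\}$ collapsing $A_i$ to $i$, together with the Markov kernel defined by $K(i,\cdot) := K_i$, satisfies $\pi_\ast K_\ast = \mathrm{id}$, so $K$ is congruent. Invariance gives $K_r^\ast \Theta^n_{\Om;r} = \Theta^n_{\{1,\ldots,N\};r}$. The image of $K_r$ is the subspace of $\Ss^r(\Om)$ supported on the chosen representatives of the partition; by refining the partition so that the support of the base measure together with the supports of the tangent directions is resolved, and using continuity of $\Theta^n_{\Om;r}$ in the Banach-lattice topology built into the framework of \cite{AJLS2015}, one shows that the whole family is determined by its restrictions to finite sample spaces.

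The second stage is the finite-case analysis. For $\hat\Om = \{1,\ldots,N\}$ one identifies $\Ss^r(\hat\Om) = \R^N$ coordinatewise via $\mu^r \leftrightarrow (\mu_i^r)$. The congruent Markov morphisms in the finite setting are generated by permutations of $\hat\Om$ and by \emph{splits} sending a point $i_0$ of mass $\mu_{i_0}$ to two points $\{i_0,\ast\}$ of masses $(\lambda\mu_{i_0}, (1-\lambda)\mu_{i_0})$; in these coordinates $K_r$ is the linear map multiplying the $i_0$-coordinate by $\lambda^r$ and appending $(1-\lambda)^r$ times it, so its differential is the same linear map at every basepoint. The pullback condition then becomes, for a multi-index $(i_1,\ldots,i_n)$ in which $i_0$ occurs $a$ times, a polynomial identity in $\lambda^r, (1-\lambda)^r$ expressing $\Theta^n_{\hat\Om;r;\,i_1,\ldots,i_n}(\mu^r)$ as a weighted sum over all $2^a$ ways of assigning each occurrence of $i_0$ to $i_0$ or $\ast$ on the enlarged sample space. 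Matching coefficients of the independent monomials $\lambda^{rs}(1-\lambda)^{r(a-s)}$ and iterating the argument over each distinct repeated index forces the dependence of the component on each coordinate $\mu_{i_j}$ to be the monomial $\mu_{i_j}^{1-a_j r}$, where $a_j$ is the multiplicity of the $j$-th distinct index, and forces the residual prefactor to be a universal constant depending only on the \emph{shape} of the multi-index (the partition of $n$ by multiplicities). The constraint $a_j \leq 1/r$ arises from requiring that the monomial extend continuously up to the boundary $\mu_{i_j}=0$. This is precisely the coordinate form of a symmetrized tensor product $\bigotimes_j L_{a_j}^{\hat\Om}$, and matching the universal constants exhibits the family as an explicit algebraic combination of canonical tensors.

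The consequence for a $k$-integrable parametrized model $(M,\Om,\pb)$ is then obtained by pulling the main assertion back through $\pb^{1/n}$: the canonical generators $L_m^{\Om}$ become $\tau^m_{(M,\Om,\pb)}$, and $k$-integrability is exactly the condition ensuring that these pullbacks are well-defined for $m \leq k$. I expect the main obstacle to lie in the combinatorial bookkeeping of stage two: whereas for $n=2,3$ the classical Chentsov and Campbell arguments exploit only a single repeated index, the general case requires showing that the split identities centered on distinct repeated indices decouple and that no cross-terms between them survive, so that the universal constants can be read off cleanly shape by shape.
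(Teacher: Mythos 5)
Your outer layers match the paper's strategy: the easy direction via $K_{1/m}^\ast L^m_{\Om'}=L^m_\Om$, the reduction of a general $\Om$ to finite sample spaces by congruent kernels built from a finite partition (the paper takes $K(i)=\frac1{m_i}\chi_{\Om_i}\mu$, so that the given base point and all step-function tangent vectors lie in the range of $K_r$ and its formal differential, and then uses density of step functions and continuity), the boundary-continuity argument that eliminates blocks with $|P_i|>1/r$, and the final pullback through $\pb^{1/k}$. The genuine gap is in your stage two. In the split identity the left-hand side $\theta^{\vec i}_{\hat\Om}(\mu^r)$ is $\lambda$-independent, but the $2^a$ terms on the right are components of the \emph{unknown} tensor on the enlarged space evaluated at the $\lambda$-dependent basepoint $K_r\mu^r$, whose $i_0$- and $\ast$-coordinates are $\lambda^r\mu_{i_0}$ and $(1-\lambda)^r\mu_{i_0}$. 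Hence the quantities multiplying the monomials $\lambda^{rs}(1-\lambda)^{r(a-s)}$ are themselves unknown functions of $\lambda$, and ``matching coefficients of independent monomials'' is not a legitimate step; linear independence of the monomials gives nothing when their coefficients may vary with $\lambda$. This is precisely the difficulty the paper's finite-space proof is organized to avoid: it first uses permutation-congruent kernels to show that at the centers $\lambda c_I$ the components depend only on the partition ${\bf P}(\vec i)$ (Lemma \ref{lem:step1}), then uses the uniform kernels $i\mapsto\frac1{|J|}\sum_j\delta_{(i,j)}$, which map centers to centers, to normalize the center values (Lemma \ref{lem:step2}), subtracts the corresponding canonical combination (Lemma \ref{lem:kill-center}), and only then propagates the vanishing from centers to rational points by rational splits and to all of $\Mm_+(I)$ by density (Lemma \ref{lem:step3}). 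Some such intermediate normalization is indispensable before any coefficient comparison; as written, your derivation is circular.

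A second, related error: your conclusion that the residual prefactor is a ``universal constant depending only on the shape'' is false for families on $\Mm^r(\Om)$ (hence for the setting of the theorem). Every congruent kernel preserves the total mass $\|\mu\|$, so invariance can never exclude coefficients depending on $\|\mu\|$; and such dependence must indeed be allowed, since by Proposition \ref{prop:alg-gen} the families $\sum_{\bf P} a_{\bf P}(\|\mu_r^{1/r}\|)\,\tau^{\bf P}_{\Om;r}$ with arbitrary continuous $a_{\bf P}$ are congruent (cf.\ Campbell's theorem, Corollary \ref{cor:campbell-general}). A constants-only classification would therefore contradict the ``if'' direction of the statement you are proving; constants are correct only after restriction to $\Pp^r(\Om)$, where $\|\mu\|=1$. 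The correct dichotomy is exactly \eqref{eq:congruent-form-r} versus \eqref{eq:congruent-form-r-P}, and your stage-two conclusion needs to be weakened accordingly, with the $\|\mu\|$-dependence tracked through the argument (which your mass-preserving splits can never pin down, only a statement ``for each fixed total mass'').
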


We shall show that this conclusion already holds if the family is invariant under congruent Markov morphisms $K: I \to \Pp(\Om)$ with {\em finite }$I$. Also, observe that this theorem yields another proof of the theorems of Chentsov \cite[Theorem 11.1]{Chentsov1982} and Campbell (\cite{Campbell1986} or \cite{AJLS}) which classify the invariant families of $2$- and $3$-tensors, respectively. Campbell's theorem covers the case where the measures no longer need to be probability measure. In such a situation, the analogue of the score \eqref{score} no longer needs to vanish, and it furnishes a nontrivial $1$-tensor.

Let us comment on the relation of our results to those of Bauer et al. \cite{BBM} \cite{BBM2}. Assuming that the sample space $\Om$ is a manifold (with boundary or even with corners), the space $\text{Dens}_+(\Om)$ of {\em (smooth) densities on $\Om$} is defined as the set of all measures of the form $\mu = f \vol_g$, where $f > 0$ is a smooth function with finite integral, $\vol_g$ being the volume form of some Riemannian metric $g$ on $M$. Thus, $\text{Dens}_+(\Om)$ is a Fr\'echet manifold, and regarding a diffeomorphism $K: \Om \to \Om$ as a congruent statistic, the induced maps $K_r: \text{Dens}_+(\Om)^r \to \text{Dens}_+(\Om)^r$ are diffeomorphisms of Fr\'echet manifolds. The main result in \cite{BBM} states that for $\dim \Om \geq 2$ any $2$-tensor field which is invariant under diffeomorphisms is a multiple of the Fisher metric. Likewise,  the space of  diffeomorphism invariant $n$-tensors for arbitrary $n$ \cite{BBM2} is generated by the canonical tensors. Thus, when restricting to parametrized measure models $\pb: M \to \text{Dens}_+(\Om) \subset \Mm(\Om)$ whose image lies in the space of densities and which are differentiable w.r.t. the Fr\'echet manifold structure on $\text{Dens}_+(\Om)$, then the invariance of a tensor field under diffeomorphisms rather than under arbitrary congruent Markov morphisms already implies that the tensor field is algebraically generated by the canonical tensors. Considering invariance under diffeomorphisms is natural in the sense that they can be regarded as the natural analogues of permutations of a finite sample space. In our more general setting, however, the concept of a diffeomorphism is no longer meaningful, and we need to consider invariance under a larger class of transformations, the congruent Markov morphisms.

In a similar spirit, J. Dowty \cite{Dowty} has shown recently that when restricting to the space of {\em exponential families}, the Fisher metric is the only $2$-tensor which is invariant under independent and identically distributed extensions and canonical sufficient statistics.

This paper is structured as follows. In Section \ref{sec:prelims} we recall from \cite{AJLS2015} the definition of a parametrized measure model, roots of measures and congruent Markov kernels, and furthermore we give an explicit description of the space of covariant families which are algebraically generated by the canonical tensors. In Section \ref{sec:congruent-general} we recall the notion of congruent families of tensor fields and show that the canonical tensors and hence tensors which are algebraically generated by these are congruent. Then we show that these exhaust all invariant families of tensor field on {\em finite }sample spaces $\Om$ in Section \ref{sec:congruent-finite}, and finally, in Section \ref{sec:congruent-arb}, by reducing the general case to the finite case through step function approximations, we obtain the classification result Theorem \ref{thm:chentsov-classif} which implies Theorem \ref{Maintheorem} as a simplified version.

\noindent {\bf Acknowledgements.} This work was mainly carried out at the Max Planck Institute for Mathematics in the Sciences in Leipzig, and we are grateful for the excellent working conditions provided at that institution. H.V. L\^e  is partially supported by Grant RVO:67985840.  

\section{Preliminary results} \label{sec:prelims}

\subsection{The space of (signed) finite measures and their powers}

Let  $(\Om, \Sigma)$ be a measurable space, that is an arbitrary set $\Om$ together with a sigma algebra $\Sigma$ of subsets of $\Om$. Regarding the sigma algebra $\Sigma$ on $\Om$ as fixed, we let
\begin{equation} \label{eq:def-SsOm}
\begin{array}{lll}
\Pp(\Om) & := & \{ \mu \;:\; \mu\; \mbox{a probability measure on $\Om$}\}\\[2mm]
\Mm(\Om) & := & \{ \mu \;:\; \mu\; \mbox{a finite measure on $\Om$}\}\\[2mm]
\Ss(\Om) & := & \{ \mu \;:\; \mu\; \mbox{a signed finite measure on $\Om$}\}\\[2mm]
\Ss_a(\Om) & := & \{ \mu \in \Ss(\Om) \;:\; \int_\Om d\mu = a\}.
\end{array}
\end{equation}

Clearly, $\Pp(\Om) \subset \Mm(\Om) \subset \Ss(\Om)$, and $\Ss_0(\Om), \Ss(\Om)$ are real vector spaces, whereas $\Ss_a(\Om)$ is an affine space with linear part $\Ss_0(\Om)$. In fact, both $\Ss_0(\Om)$ and $\Ss(\Om)$ are Banach spaces whose norm is given by the total variation of a signed measure, defined as
\[
    \|\mu\| \; := \; \sup \sum_{i = 1}^n |\mu(A_i)|
\] 
where the supremum is taken over all finite partitions $\Omega = A_1 \dot\cup \dots \dot\cup A_n$ with disjoint sets $A_i \in \Sigma$. Here, the symbol $\dot \cup$ stands for the disjoint union of sets. In particular,
\[
    \|\mu\| \; = \; \mu(\Om) \qquad \mbox{for $\mu \in \Mm(\Om)$}.
\] 

In \cite{AJLS2015}, for each $r \in (0,1]$ the space $\Ss^r(\Om)$ of {\em $r$-th powers of measures on $\Om$} is defined. We shall not repeat the formal definition here, but we recall the most important features of these spaces.

Each $\Ss^r(\Om)$ is a Banach lattice whose norm we denote by $\|\cdot\|_{\Ss^r(\Om)}$, and $\Mm^r(\Om) \subset \Ss^r(\Om)$ denotes the spaces of nonnegative elements. Moreover, $\Ss^1(\Om) = \Ss(\Om)$ in a canonical way. For $r,s,r+s \in (0,1]$ there is a bilinear product
\begin{equation} \label{eq:prod-meas}
\cdot: \Ss^r(\Om) \times \Ss^s(\Om) \longrightarrow \Ss^{r+s}(\Om) \quad \mbox{such that} \quad \|\mu_r \cdot \mu_s\|_{\Ss^{r+s}(\Om)} \leq \|\mu_r\|_{\Ss^r(\Om)} \|\mu_s\|_{\Ss^s(\Om)},
\end{equation}
and for $0 < k < 1/r$ there is a exponentiating map $\pi^k: \Ss^r(\Om) \to \Ss^{kr}\Om)$ which is continuous for $k < 1$ and a Fr\'echet-$C^1$-map for $k \geq 1$.

In order to understand these objects more concretely, let $\mu \in \Mm(\Om)$ be a measure, so that $\mu^r := \pi^r(\mu) \in \Ss^r(\Om)$. Then for all $\phi \in L^{1/r}(\Om, \mu)$ we have $\phi \mu^r \in \Ss^r(\Om)$, and $\phi \mu^r \in \Mm^r(\Om)$ if and only if $\phi \geq 0$. The inclusion
\[
\Ss^r(\Om; \mu) := \{ \phi \mu^r \mid \phi \in L^{1/r}(\Om, \mu)\} \hookrightarrow \Ss^r(\Om)
\]
is an isometric inclusion of Banach spaces, and the elements of $\Ss^r(\Om, \mu)$ are said to be {\em dominated by $\mu$}. We also define
\[
\Ss^r_0(\Om; \mu) := \{ \phi \mu^r \mid \phi \in L^{1/r}(\Om, \mu), {\mathbb E}_\mu(\phi) = 0\} \subset \Ss^r(\Om; \mu).
\]
Moreover,
\begin{equation} \label{eq:product-concrete}
(\phi \mu^r) \cdot (\psi \mu^s) = (\phi \psi) \mu^{r+s}, \qquad \pi^k(\phi \mu^r) := \sign(\phi) |\phi|^k \mu^{rk},
\end{equation}
where $\phi \in L^{1/r}(\Om, \mu)$ and $\psi \in L^{1/s}(\Om, \mu)$. The Fr\'echet derivative of $\pi^k$ at $\mu_r \in \Ss^r(\Om)$ is given by
\begin{equation} \label{eq:deriv-pik}
d_{\mu_r} \pi^k(\nu_r) = k\; |\mu|^{k-1} \cdot \nu_r.
\end{equation}

Furthermore, for an integer $n \in \N$, we have the {\em canonical $n$-tensor on $\Ss^{1/n}(\Om)$}, given by
\begin{equation} \label{eq:canonical-nform}
L^n_\Om(\mu_1, \ldots, \mu_n) := n^n \int_\Om d(\mu_1 \cdots \mu_n) \qquad \mbox{for $\mu_i \in \Ss^{1/n}(\Om)$},
\end{equation}
which is a symmetric $n$-multilinear form, where we regard the product $\mu_1 \cdots \mu_n$ as an element of $\Ss^1(\Om) = \Ss(\Om)$. For instance, for $n = 2$ the bilinear form
\[
\la \cdot; \cdot\ra := \dfrac14 L^2_\Om(\cdot, \cdot)
\]
equips $\Ss^{1/2}(\Om)$ with a Hilbert space structure with induced norm $\|\cdot\|_{\Ss^{1/2}(\Om)}$.

\subsection{Parametrized measure models} \label{sec:pmm}

Recall from \cite{AJLS2015} that a parametrized measure model is a triple $(M, \Om, \pb)$ consisting of a (finite or infinite dimensional) manifold $M$ and a map $\pb: M \to \Mm(\Om)$ which is Fr\'echet-differentiable when regarded as a map into $\Ss(\Om)$ (cf. \cite[Definition 4.1]{AJLS2015}). If $\pb(\xi) \in \Pp(\Om)$ for all $\xi \in M$, then $(M, \Om, \pb)$ is called a {\em statistical model}. Moreover, $(M, \Om, \pb)$ is called {\em $k$-integrable}, if $\pb^{1/k}: M \to \Ss^{1/k}(\Om)$ is also Fr\'echet integrable (cf. \cite[Definition 2.6]{CRa}). For a parametrized measure model, the differential $d_\xi \pb(v) \in \Ss(\Om)$ with $v \in T_\xi M$ is always dominated by $\pb(\xi) \in \Mm(\Om)$, and we define the {\em logarithmic derivative} (cf. \cite[Definition 4.3]{AJLS2015}) as the Radon-Nikodym derivative
\begin{equation} \label{eq:log-der}
\p_v \log \pb(\xi) := \dfrac{d\{d_\xi \pb(v)\}}{d\pb(\xi)} \in L^1(\Om, \pb(\xi)).
\end{equation}
Then $\pb$ is $k$-integrable if and only if $\p_v \log \pb \in L^k(\Om, \pb(\xi)$ for all $v \in T_\xi M$, and the function $v \mapsto \|\p_v \log \pb\|_{\p_v \log \pb(\xi)}$ on $TM$ is continuous (cf. \cite[Theorem 2.7]{CRa}). In this case, the Fr\'echet derivative of $\pb^{1/k}$ is given as
\begin{equation} \label{eq:formal-derivative}
d_\xi \pb^{1/k}(v) = \dfrac1k \p_v \log \pb(\xi) \pb^{1/k}.
\end{equation}

\subsection{Congruent Markov morphisms}

\begin{definition} \label{def:markov}
A {\em Markov kernel } between two measurable spaces $(\Omega, \mathfrak B)$ and $(\Omega', \mathfrak B')$ is a map $K: \Om \to \Pp(\Om')$ associating to each $\om \in \Om$ a probability measure on $\Om'$ such that for each fixed measurable $A' \subset \Om'$ the map
\[
\Om \longrightarrow [0,1], \qquad \om \longmapsto K(\om) (A') =: K(\om;A')
\]
is measurable for all $A' \in \mathfrak B'$. The linear map
\begin{equation} \label{eq:Markov-linear}
K_\ast: \Ss(\Om) \longrightarrow \Ss(\Om'), \qquad K_*\mu(A') := \int_\Om K(\om; A')\; d\mu(\om)
\end{equation}
is called the {\em Markov morphism induced by $K$}.
\end{definition}

Evidently, a Markov morphism maps $\Mm(\Om)$ to $\Mm(\Om')$, and
\begin{equation} \label{eq:Markov-preserve}
\|K_\ast \mu\| = \|\mu\| \qquad \mbox{ for all $\mu \in \Mm(\Om)$,}
\end{equation}
so that $K_\ast$ also maps $\Pp(\Om)$ to $\Pp(\Om')$. For any $\mu \in \Ss(\Om)$, $\|K_\ast \mu\| \leq \|\mu\|$, whence $K_\ast$ is bounded.

\begin{example} \label{ex:MarKerFin}
A measurable map $\kappa: \Om \to \Om'$, called a {\em statistic}, induces a Markov kernel by setting $K^\kappa(\om) := \delta_{\kappa \om} \in \Pp(\Om')$. In this case,
\[
K^\kappa_\ast \mu(A') = \int_\Om K^\kappa(\om; A')\; d\mu(\om) = \int_{\kappa^{-1}(A')} d\mu = \mu(\kappa^{-1} A') = \kappa_\ast \mu(A'),
\]
whence $K^\kappa_\ast \mu = \kappa_\ast \mu$ is the push-forward of (signed) measures on $\Om$ to (signed) measures on $\Om'$.
\end{example}

\begin{definition} \label{def:CongMarkov}
A Markov kernel $K: \Om \to \Pp(\Om')$ is called {\em congruent w.r.t. to the statistic $\kappa: \Om' \to \Om$} if
\[
\kappa_*K(\om) = \delta_{\om} \qquad \mbox{for all $\om \in \Om$},
\]
or, equivalently, if $K_*$ is a right inverse of $\kappa_*$, i.e., $\kappa_* K_* = {\rm Id}_{\Ss(\Om)}$. It is called  {\em congruent }if it is congruent w.r.t. some statistic $\kappa: \Om' \to \Om$.
\end{definition}

This notion was introduced by Chentsov in the case of finite sample spaces \cite{Chentsov1982}, but the natural generalization in Definition \ref{def:CongMarkov} to arbitrary sample spaces has been treated in \cite{AJLS}, \cite{AJLS2015}  and \cite{Le2016}.

\begin{example} \label{ex:congstat}
A statistic $\kappa: \Om \to I$ between finite sets induces a partition
\[
\Om = \dot \bigcup_{i \in I} \Om_i, \qquad \mbox{where} \qquad \Om_i = \kappa^{-1}(i).
\]
In this case, a Markov kernel $K: I \to \Pp(\Om)$ is $\kappa$-congruent of and only of
\[
K(i)(\Om_j) = K(i; \Om_j) = 0 \qquad \mbox{for all $i \neq j \in I$.}
\]
\end{example}

If $(M, \Om, \pb)$ is a parametrized measure model and $K: \Om \to \Pp(\Om')$ a Markov kernel, then $(M, \pb', \Om')$ with $\pb' := K_\ast \pb: M \to \Mm(\Om') \subset \Ss(\Om')$ is again a parametrized measure model. In this case, we have the following result.

\begin{proposition} \label{prop:formal-p1/k} (\cite[Theorem 3.3]{AJLS2015})
Let $K_\ast: \Ss(\Om) \to \Ss(\Om')$ be a Markov morphism induced by the Markov kernel $K: \Om \to \Pp(\Om')$, let $\pb: M \to \Mm(\Om)$ be a $k$-integrable parametrized measure model and $\pb' := K_\ast \pb: M \to \Mm(\Om')$. Then $\pb'$ is also $k$-integrable, and
\begin{equation} \label{eq:dp'-compare}
\|\p_v \log \pb'(\xi)\|_{L^k(\Om', \pb'(\xi))} \leq \|\p_v \log \pb(\xi)\|_{L^k(\Om, \pb(\xi))}.
\end{equation}
\end{proposition}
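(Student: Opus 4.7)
My plan is to identify $\p_v \log \pb'(\xi)$ as the image of $\p_v \log \pb(\xi)$ under a canonical Markov operator, and then exploit that such operators are $L^k$-contractions via conditional Jensen's inequality.

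Since $K_\ast$ is bounded linear and $\pb: M \to \Ss(\Om)$ is Fr\'echet-differentiable, the chain rule yields $d_\xi \pb'(v) = K_\ast(d_\xi\pb(v))$; combined with $d_\xi\pb(v) \ll \pb(\xi)$ from (\ref{eq:log-der}) and the integral formula (\ref{eq:Markov-linear}), this gives $d_\xi\pb'(v) \ll \pb'(\xi)$, so $\p_v\log\pb'(\xi)$ is well-defined. The assignment $f \mapsto d(K_\ast(f\pb(\xi)))/d\pb'(\xi)$ defines a bounded linear operator $\Pi_K: L^1(\Om, \pb(\xi)) \to L^1(\Om', \pb'(\xi))$, and by linearity $\p_v \log \pb'(\xi) = \Pi_K(\p_v \log \pb(\xi))$.

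The main step is to show that $\Pi_K$ restricts to a contraction $L^k(\Om, \pb(\xi)) \to L^k(\Om', \pb'(\xi))$. I would couple $\Om$ and $\Om'$ through the product space $(\Om \times \Om', \tilde\mu)$ with $\tilde\mu(d\om, d\om') := K(\om; d\om')\,\pb(\xi)(d\om)$, which is well-defined thanks to the measurability built into Definition \ref{def:markov} and has $\pb(\xi)$ and $\pb'(\xi)$ as its marginals. Viewing $f \in L^k(\Om, \pb(\xi))$ as a function on the product depending only on the first coordinate, the identity
\[
\int_{A'} \Pi_K f\, d\pb'(\xi) \;=\; \int_{\Om} f(\om)\,K(\om; A')\, d\pb(\xi)(\om) \;=\; \int_{\Om \times A'} f\, d\tilde\mu
\]
identifies $\Pi_K f$ with the conditional expectation of $f$ given the second coordinate. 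Conditional Jensen's inequality applied to the convex map $t \mapsto |t|^k$ then gives
\[
\int_{\Om'} |\Pi_K f|^k\, d\pb'(\xi) \;\leq\; \int_{\Om \times \Om'} |f|^k\, d\tilde\mu \;=\; \int_\Om |f|^k\, d\pb(\xi),
\]
which, applied to $f = \p_v \log \pb(\xi)$, is exactly (\ref{eq:dp'-compare}).

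The $k$-integrability of $\pb'$ then follows: the pointwise norm bound yields $\p_v \log \pb'(\xi) \in L^k(\Om', \pb'(\xi))$ for every $v$, while the continuity of $v \mapsto \|\p_v \log \pb'\|_{L^k(\pb'(\xi))}$ on $TM$ required by \cite[Theorem 2.7]{CRa} is inherited from the analogous continuity for $\pb$, since the operators $\Pi_K$ have norm at most $1$ and vary continuously with $\xi$ through the family $\pb(\xi)$. The main obstacle is the contraction step itself --- specifically, the rigorous identification of $\Pi_K f$ with the conditional expectation on $(\Om \times \Om', \tilde\mu)$, which rests on the Fubini/measurability bookkeeping indicated above; once this is in place, the rest is essentially routine.
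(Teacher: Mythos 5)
The paper itself does not prove Proposition \ref{prop:formal-p1/k}; it quotes it from \cite[Theorem 3.3]{AJLS2015}, so your proposal can only be measured against what the statement requires. For the inequality (\ref{eq:dp'-compare}) your argument is correct and is essentially the standard mechanism behind the cited result: since $K_\ast$ is bounded linear, $d_\xi\pb'(v)=K_\ast(d_\xi\pb(v))$, the domination $d_\xi\pb'(v)\ll\pb'(\xi)$ follows, and the operator $\Pi_K f=\frac{d\{K_\ast(f\pb(\xi))\}}{d\pb'(\xi)}$ is exactly the conditional expectation with respect to the second coordinate of the coupling $\tilde\mu(d\om,d\om')=K(\om;d\om')\,d\pb(\xi)(\om)$, whose marginals are $\pb(\xi)$ and $\pb'(\xi)$; conditional Jensen for $t\mapsto|t|^k$ ($k\geq1$, finite measures, $L^k\subset L^1$) then gives the contraction, hence (\ref{eq:dp'-compare}) with $f=\p_v\log\pb(\xi)$. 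This part is sound, modulo the standard construction of $\tilde\mu$ which you correctly flag as routine.

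The genuine gap is the first assertion, the $k$-integrability of $\pb'$. By the criterion you invoke (\cite[Theorem 2.7]{CRa}, as quoted in Section \ref{sec:pmm}), besides $\p_v\log\pb'(\xi)\in L^k(\Om',\pb'(\xi))$ you must prove that $(\xi,v)\mapsto\|\p_v\log\pb'(\xi)\|_{L^k(\Om',\pb'(\xi))}$ is continuous on $TM$ (equivalently, that ${\pb'}^{1/k}$ is continuously Fr\'echet differentiable), and your sentence that this is ``inherited'' because the $\Pi_K$ have norm at most $1$ and ``vary continuously with $\xi$'' asserts precisely the nontrivial point instead of proving it. The contraction only yields an upper bound of the $\pb'$-norm by the $\pb$-norm at each point, which is not continuity; and the operators $\Pi_{K,\xi}$ map $L^k(\Om,\pb(\xi))\to L^k(\Om',\pb'(\xi))$, i.e.\ between spaces that themselves move with $\xi$, so ``continuous dependence'' has no ready-made meaning. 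In particular they do not assemble into one bounded operator on $\Ss^{1/k}(\Om)$: the formal derivative (\ref{eq:K-mu-r}) depends on the base measure, not just on the element of $\Ss^{1/k}(\Om)$. For instance, take $I=\{1,2\}$, $\Om'$ a single point, $k=2$, $\lambda$ the counting measure and $\nu$ the measure with weights $(4,1)$; then $2\delta_1^{1/2}+\delta_2^{1/2}$ equals both $(2,1)\lambda^{1/2}$ and $(1,1)\nu^{1/2}$, yet the recipe of (\ref{eq:K-mu-r}) sends these two representations to $\tfrac{3}{\sqrt2}\,\delta_\ast^{1/2}$ and $\sqrt5\,\delta_\ast^{1/2}$, respectively. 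So comparing $\Pi_{K,\xi}$ and $\Pi_{K,\xi_0}$ for nearby $\xi,\xi_0$ (e.g.\ via a common dominating measure and approximation arguments) is unavoidable, and this comparison is exactly the technical content of \cite[Theorem 3.3]{AJLS2015} that your proposal leaves unaddressed; as written, half of the proposition is not established.
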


\subsection{Tensor algebras} \label{sec:algebra}

In this section we shall provide the algebraic background on tensor algebras. Let $V$ be a vector space over a commutative field $\F$, and let $V^\ast$ be its dual. The {\em tensor algebra of $V^\ast$} is defined as
\[
\T(V^\ast) := \bigoplus_{n=0}^\infty \otimes^n V^\ast,
\]
where
\[
\otimes^n V^\ast = \{ \tau^n: \underbrace{V \times \cdots \times V}_{\text{$n$ times}} \longrightarrow \F \mid \mbox{$\tau^n$ is $n$-multilinear}\}.
\]
In particular, $\otimes^0 V^\ast := \F$ and $\otimes^1 V^\ast := V^\ast$. $\T(V^\ast)$. Then $\T(V^\ast)$ is a graded associative unital algebra, where the product $\otimes: \otimes^n V^\ast \times \otimes^m V^\ast \to \otimes^{n+m} V^\ast$ is defined as
\begin{equation} \label{eq:tensorprod}
(\tau_1^n \otimes \tau_2^m) (v_1, \ldots, v_{n+m}) := \tau_1^n(v_1, \ldots, v_n) \cdot \tau_2^m(v_{n+1}, \ldots, v_{n+m}).
\end{equation}
By convention, the multiplication with elements of $\otimes^0 V^\ast = \F$ is the scalar multiplication, so that $1 \in \F$ is the unit of $\T(V^\ast)$. Observe that $\T(V^\ast)$ is non-commutative.

There is a linear action of $S_n$, the permutation group of $n$ elements, on $\otimes^n V^\ast$ given by
\begin{equation} \label{eq:tensorperm}
(P_\sigma \tau^n)(v_1, \ldots, v_n) := \tau^n(v_{\sigma^{-1}(1)}, \ldots, v_{\sigma^{-1}(n)})
\end{equation}
for $\sigma \in S_n$ and $\tau^n \in \otimes^n V^\ast$. Indeed, the identity $P_{\sigma_1}(P_{\sigma_2} \tau^n) = P_{\sigma_1\sigma_2} \tau^n$ is easily verified. We call a tensor $\tau^n \in \otimes^n V^\ast$ {\em symmetric}, if $P_\sigma \tau^n = \tau^n$ for all $\sigma \in S_n$, and we let
\[
\odot^n V^\ast := \{ \tau^n \in \otimes^n V^\ast \mid \mbox{$\tau^n$ is symmetric}\}
\]
the {\em $n$-fold symmetric power of $V^\ast$}. Evidently, $\odot^n V^\ast \subset \otimes^n V^\ast$ is a linear subspace.

A {\em unital subalgebra of $\T(V^\ast)$} is a linear subspace $\A \subset \T(V^\ast)$ containing $\F = \odot^0 V^\ast$ which is closed under tensor products, i.e. such that $\tau_1, \tau_2 \in \A$ implies that $\tau_1 \otimes \tau_2 \in \A$. We call such a subalgebra {\em graded} if
\[
\A = \bigoplus_{n=0}^\infty \A_n \quad \mbox{with $\A_n := \A \cap \otimes^n V^\ast$},
\]
and a graded subalgebra $\A \subset \T(V)$ is called {\em permutation invariant }if $\A_n$ is preserved by the action of $S_n$ on $\A_n \subset \otimes^n V^\ast$.

\begin{definition}
Let $\Ss \subset \T(V^\ast)$ be an arbitrary subset. The intersection of all permutation invariant unital subalgebras of $\T(V^\ast)$ containing $\Ss$ is called the {\em permutation invariant subalgebra generated by $\Ss$} and is denoted by $\A_{\text{perm}}(\Ss)$.
\end{definition}

Observe that $\A_{\text{perm}}(\Ss)$ is the smallest permutation invariant unital subalgebra of $\T(V^\ast)$ which contains $\Ss$.

\begin{example} Evidently, $\A_{\text{perm}}(\emptyset) = \F$.

To see another example, let $\tau^1 \in V^\ast$. If we let $\A_0 := \F$ and  $\A_n := \F (\underbrace{\tau^1 \otimes \cdots \otimes \tau^1}_{\text{$n$ times}})$ for $n \geq 1$, then $\A_{\text{perm}}(\tau^1) = \bigoplus_{n=0}^\infty \A_n$. In fact, $\A_{\text{perm}}(\tau^1)$ is even commutative and isomorphic to the algebra of polynomials over $\F$ in one variable. 
\end{example}

For $n \in \N$, we denote by $\mbox{\bf Part}(n)$ the collection of partitions ${\bf P} = \{P_1, \ldots, P_r\}$ of $\{1, \ldots, n\}$, that is, $\bigcup_k P_k = \{1, \ldots, n\}$, and these sets are pairwise disjoint.
We denote the number $r$ of sets in the partition by $|{\bf P}|$.

Given a partition ${\bf P} = \{P_1, \ldots, P_r\} \in \mbox{\bf Part}(n)$, we associate to it a bijective map
\begin{equation} \label{eq:pi-P}
\pi_{\bf P}:  \; \biguplus_{i \in \{1, \ldots, r\}} \left( \{ i \} \times \{1,\dots,n_i\} \right) \longrightarrow \{1, \ldots, n\},
\end{equation}
where $n_i := |P_i|$, such that $\pi_{\bf P} (\{ i \} \times \{1,\dots,n_i\}) =P_i$. This map is well defined, up to permutation of the elements in $P_i$.

$\mbox{\bf Part}(n)$ is partially ordered by the relation ${\bf P} \leq {\bf P}'$ if ${\bf P}$ is a subdivision of ${\bf P}'$. This ordering has the partition $\{ \{1\}, \ldots, \{n\}\}$ into singleton sets as its minimum and $\{ \{1, \ldots, n\} \}$ as its maximum.

Consider now a subset of $\T(V^\ast)$ of the form
\begin{equation} \label{eq:def-S}
\Ss := \{ \tau^1, \tau^2, \tau^3, \ldots\} \quad \mbox{containing one symmetric tensor $\tau^n \in \odot^n V^\ast$ for each $n \in \N$}.
\end{equation}
For a partition ${\bf P} \in \mbox{\bf Part}(n)$ with the associated map $\pi_{\bf P}$ from (\ref{eq:pi-P}) we define $\tau^{\bf P} \in \otimes^n V^\ast$ as
\begin{equation} \label{eq:cantens-P}
\tau^{\bf P}(v_1, \ldots, v_n) := \prod_{i=1}^r \tau^{n_i}(v_{\pi_{\bf P}(i, 1)}, \ldots, v_{\pi_{\bf P}(i, n_i)}).
\end{equation}

Observe that this definition is independent of the choice of the bijection $\pi_{\bf P}$, since $\tau^{n_i}$ is symmetric.

\begin{example} \label{ex:partition}
\begin{enumerate}
\item
If ${\bf P} = \{ \{1, \ldots, n\} \}$ is the trivial partition, then
\[
\tau^{\bf P} = \tau^n.
\]
\item
If ${\bf P} = \{ \{1\}, \ldots, \{n\}\}$ is the partition into singletons, then
\[
\tau^{\bf P}(v_1, \ldots, v_n) = \tau^1(v_1) \cdots \tau^1(v_n).
\]
\item
To give a concrete example, let $n = 5$ and ${\bf P} = \{\{1, 3\}, \{2, 5\}, \{4\}\}$. Then
\[
\tau^{\bf P}(v_1, \ldots, v_5) = \tau^2(v_1, v_3) \cdot \tau^2(v_2, v_5) \cdot \tau^1(v_4).
\]
\end{enumerate}
\end{example}

We can now present the main result of this section.

\begin{proposition} \label{prop:generated-algebra}
Let $\Ss \subset \T(V^\ast)$ be given as in (\ref{eq:def-S}). Then the permutation invariant subalgebra generated by $\Ss$ equals
\begin{equation} \label{eq:ApermS}
\A_{\text{\em perm}}(\Ss) = \F \oplus \bigoplus_{n=1}^\infty \span \left\{ \tau^{\bf P} \mid {\bf P} \in \mbox{\bf Part}(n)\right\}.
\end{equation}
\end{proposition}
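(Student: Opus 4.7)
The plan is to prove the two inclusions separately, letting $\Bb$ denote the right-hand side of (\ref{eq:ApermS}).

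\textbf{Inclusion $\Bb \subseteq \A_{\text{perm}}(\Ss)$.} Fix $n$ and a partition ${\bf P} = \{P_1,\dots,P_r\} \in \mbox{\bf Part}(n)$. Using (\ref{eq:tensorprod}), the ``standard'' tensor product
\[
\tau^{n_1} \otimes \tau^{n_2} \otimes \cdots \otimes \tau^{n_r} \in \otimes^n V^\ast
\]
evaluated on $(v_1,\dots,v_n)$ equals $\prod_i \tau^{n_i}(v_{n_1+\dots+n_{i-1}+1},\dots,v_{n_1+\dots+n_i})$. Comparing with (\ref{eq:cantens-P}) and using the symmetry of each $\tau^{n_i} \in \odot^{n_i} V^\ast$, one sees that $\tau^{\bf P} = P_\sigma(\tau^{n_1}\otimes\cdots\otimes\tau^{n_r})$ for a suitable $\sigma \in S_n$ reading off the bijection $\pi_{\bf P}$. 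Since $\A_{\text{perm}}(\Ss)$ contains each $\tau^{n_i}$, is closed under $\otimes$, and is $S_n$-invariant, $\tau^{\bf P} \in \A_{\text{perm}}(\Ss)$. Taking $\F$-linear spans over all ${\bf P}$ and all $n$ yields $\Bb \subseteq \A_{\text{perm}}(\Ss)$.

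\textbf{Inclusion $\A_{\text{perm}}(\Ss) \subseteq \Bb$.} It suffices to verify that $\Bb$ is itself a permutation invariant unital subalgebra containing $\Ss$, since $\A_{\text{perm}}(\Ss)$ is the smallest such subalgebra. That $\F = \odot^0 V^\ast \subseteq \Bb$ and $\tau^n \in \Bb$ for every $n$ are built into the definition: for the trivial partition ${\bf P} = \{\{1,\dots,n\}\}$ one has $\tau^{\bf P} = \tau^n$ by Example \ref{ex:partition}(1).

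For closure under $\otimes$, given ${\bf P} \in \mbox{\bf Part}(n)$ and ${\bf Q} \in \mbox{\bf Part}(m)$, form the partition ${\bf P} \sqcup (n+{\bf Q}) \in \mbox{\bf Part}(n+m)$ obtained by shifting the labels of ${\bf Q}$ by $n$. A direct comparison of (\ref{eq:tensorprod}) and (\ref{eq:cantens-P}) using a bijection $\pi_{{\bf P} \sqcup (n+{\bf Q})}$ that restricts to $\pi_{\bf P}$ on the first $n$ indices and to $\pi_{\bf Q}$ (shifted) on the rest gives $\tau^{\bf P} \otimes \tau^{\bf Q} = \tau^{{\bf P} \sqcup (n+{\bf Q})}$, which lies in $\Bb$. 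Bilinearity extends this to arbitrary elements of $\Bb$.

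For $S_n$-invariance, fix $\sigma \in S_n$ and ${\bf P} \in \mbox{\bf Part}(n)$, and let $\sigma^{-1}({\bf P})$ denote the partition with blocks $\{\sigma^{-1}(P_i)\}_i$. Choosing $\pi_{\sigma^{-1}({\bf P})} := \sigma^{-1}\circ\pi_{\bf P}$ in (\ref{eq:cantens-P}) and using the definition (\ref{eq:tensorperm}) one computes $P_\sigma \tau^{\bf P} = \tau^{\sigma^{-1}({\bf P})} \in \Bb$. Extending by linearity proves that each graded piece $\span\{\tau^{\bf P} \mid {\bf P} \in \mbox{\bf Part}(n)\}$ is preserved by $S_n$.

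The only real bookkeeping obstacle is making the bijections $\pi_{\bf P}$ compatible under concatenation and under composition with $\sigma$; the symmetry of the $\tau^{n_i}$ makes any choice of bijection yield the same tensor, which removes the apparent ambiguity in the definition of $\tau^{\bf P}$ and in the identifications above.
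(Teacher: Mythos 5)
Your proposal is correct and follows essentially the same route as the paper's proof: you verify that the right-hand side is a permutation invariant unital subalgebra containing $\Ss$ via the identities $\tau^{\bf P}\otimes\tau^{\bf Q}=\tau^{{\bf P}\sqcup(n+{\bf Q})}$ and $P_\sigma\tau^{\bf P}=\tau^{\sigma^{-1}({\bf P})}$, and for the reverse inclusion you write $\tau^{\bf P}$ as a permutation of an ordered tensor product $\tau^{k_1}\otimes\cdots\otimes\tau^{k_r}$, exactly as the paper does. The remark that symmetry of each $\tau^{n_i}$ makes the choice of $\pi_{\bf P}$ immaterial is also the paper's observation, so no gap remains.
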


\begin{proof}
Let us denote the right hand side of (\ref{eq:ApermS}) by $\A_{\text{perm}}'(\Ss)$, so that we wish to show that $\A_{\text{perm}}(\Ss) = \A_{\text{perm}}'(\Ss)$.

By Example \ref{ex:partition}.1, $\tau^n \in \A_{\text{perm}}'(\Ss)$ for all $n \in \N$, whence $\Ss \subset \A_{\text{perm}}'(\Ss)$. Furthermore, by (\ref{eq:cantens-P}) we have 
\[
\tau^{\bf P} \otimes \tau^{{\bf P}'} = \tau^{{\bf P} \cup {\bf P}'},
\]
where ${\bf P} \cup {\bf P}' \in {\bf Part}(n+m)$ is the partition of $\{1, \ldots, n+m\}$ obtained by regarding ${\bf P} \in {\bf Part}(n)$ and ${\bf P}' \in {\bf Part}(m)$ as partitions of $\{1, \ldots, n\}$ and $\{n+1, \ldots, n+m\}$, respectively. Moreover, if $\sigma \in S_n$ is a permutation and ${\bf P} = \{P_1, \ldots, P_r\}$ a partition, then the definition in (\ref{eq:cantens-P}) implies that
\[
P_\sigma (\tau^{\bf P}) = \tau^{\sigma^{-1}{\bf P}}, \quad \mbox{where $\sigma^{-1}(\{P_1, \ldots, P_r\}) := \{\sigma^{-1}P_1, \ldots, \sigma^{-1}P_r\}$}.
\]

That is, $\A_{\text{perm}}'(\Ss) \subset \T(V^\ast)$ is a permutation invariant unital subalgebra of $\T(V^\ast)$ containg $\Ss$, whence $\A_{\text{perm}}(\Ss) \subset \A_{\text{perm}}'(\Ss)$.

For the converse, observe that for a partition ${\bf P} = \{P_1, \ldots, P_r\} \in {\bf Part}(n)$, we may -- after applying a permutation of $\{1, \ldots, n\}$ -- assume that
\[
P_1 = \{1, \ldots, k_1\}, P_2 = \{k_1+1, \ldots, k_1+k_2\}, \ldots, P_r = \{n-k_r+1, \ldots, n\},
\]
with $k_i = |P_i|$, and in this case, (\ref{eq:tensorprod}) and (\ref{eq:cantens-P}) implies that
\[
\tau^{\bf P} = (\tau^{k_1}) \otimes (\tau^{k_2}) \otimes \cdots \otimes (\tau^{k_r}) \in \A_{\text{perm}}(\Ss),
\]
so that any permutation invariant subalgebra containing $\Ss$ also must contain $\tau^{\bf P}$ for all partitions, and this shows that $\A_{\text{perm}}'(\Ss) \subset \A_{\text{perm}}(\Ss)$.
\end{proof}

\subsection{Tensor fields} \label{sec:tens}

Recall that a {\em (covariant) $n$-tensor field\footnote{Since we do not consider non-covariant $n$-tensor fields in this paper, we shall suppress the attribute {\em covariant}.} $\Psi$ on a manifold $M$}
is a collection of $n$-multilinear forms $\Psi_p$ on $T_pM$ for all $p \in M$ such that for continuous vector fields $X^1, \ldots, X^n$ on $M$ the function
\[
p \longmapsto \Psi_p(X^1_p, \ldots, X^n_p)
\]
is continuous. This notion can also be adapted to the case where $M$ has a weaker structre than that of a manifold. The examples we have in mind are the subsets $\Pp^r(\Om) \subset \Mm^r(\Om)$ of $\Ss^r(\Om)$ for an arbitrary measurable space $\Om$ and $r \in (0,1]$, which fail to be manifolds. Nevertheless, there is a natural notion of {\em tangent cone at $\mu_r$} of these sets which is the collection of the derivatives of all curves in $\Mm^r(\Om)$ (in $\Pp^r(\Om)$, respectively) through $\mu_r$. These cones were determined in \cite[Proposition 2.1]{AJLS2015} as
\[
T_{\mu^r}\Mm^r(\Om) = \Ss^r(\Om; \mu) \quad \mbox{and} \quad T_{\mu^r}\Pp^r(\Om) = \Ss^r_0(\Om; \mu).
\]
with $\mu \in \Mm(\Om)$ ($\mu \in \Pp(\Om)$, respectively). Then in analogy to the notion for general manifolds, we can now define the notion of $n$-tensor field on $\Mm^r(\Om)$ and $\Pp^r(\Om)$ as follows.

\begin{definition} \label{def:tensor-MMr}
Let $\Om$ be a measurable space and $r \in (0,1]$. A {\em vector field on $\Mm^r(\Om)$ }is a continuous map $X: \Mm^r(\Om) \to \Ss^r(\Om)$ such that $X_{\mu^r} \in T_{\mu^r}\Mm^r(\Om)$ for all $\mu^r \in \Mm^r(\Om)$. The notion of a vector field on $\Pp^r(\Om)$ is defined analogously.

A {\em (covariant) $n$-tensor field on $\Mm^r(\Om)$ }is a collection of $n$-multilinear forms $\Psi_{\mu^r}$ on $T_{\mu^r}\Mm^r(\Om)$ for all $\mu^r \in \Mm^r(\Om)$ such that for continuous vector fields $X^1, \ldots, X^n$ on $\Mm^r(\Om)$ the function
\[
\mu^r \longmapsto \Psi_{\mu^r}(X^1_{\mu^r}, \ldots, X^n_{\mu^r})
\]
is continuous. The notion of vector fields and $n$-tensor fields on $\Pp^r(\Om)$ is defined analogously.
\end{definition}

If $\Psi, \Psi'$ are tensor fields of degree $n$ and $m$, respectively, and $\sigma \in S_n$ is a permutation, then the pointwise tensor product $\Psi \otimes \Psi'$ and the permutation $P_\sigma \Psi$ defined in (\ref{eq:tensorprod}) and (\ref{eq:tensorperm}) are tensor fields of degree $n+m$ and $n$, respectively. Moreover, for a differentiable map $f: N \to M$ the {\em pull-back of $\Psi$ under $f$ }is the tensor field on $N$ defined by
\begin{equation} \label{eq:tensor-pullback}
f^\ast \Psi(v_1, \ldots, v_n) := \Psi(df(v_1), \ldots, df(v_n)).
\end{equation}
Evidently, we have
\begin{equation} \label{eq:pullback-homom}
f^\ast(\Psi \otimes \Psi') = (f^\ast \Psi) \otimes (f^\ast \Psi') \qquad \mbox{and} \qquad P_\sigma(f^\ast \Psi) = f^\ast (P_\sigma \Psi).
\end{equation}

For instance, if $(M, \Om, \pb)$ is a $k$-integrable parametrized measure model, then by (\ref{eq:formal-derivative}), $d_\xi \pb^{1/k}(v) \in \Ss^{1/k}(\Om; \mu) = T_{\pb^{1/k}(\xi)} \Mm^{1/k}(\Om)$, so that for any $n$-tensor field $\Psi$ on $\Mm^{1/k}(\Om)$ the pull-back
\[
(\pb^{1/k})^\ast \Psi(v_1, \ldots, v_n) := \Psi(d \pb^{1/k}(v_1), \ldots, d \pb^{1/k}(v_n))
\]
is well defined. The same holds if $\pb: M \to \Pp(\Om)$ is a statistical model and $\Psi$ is an $n$-tensor field on $\Pp^{1/k}(\Om)$. Moreover, (\ref{eq:pullback-homom}) holds in this context as well when replacing $f$ by $\pb^{1/k}$.

\begin{definition} \label{def:tau-n-Omr}
Let $\Om$ be a measurable space, $n \in \N$ an integer and $0 < r \leq 1/n$. Then {\em canonical $n$-tensor field on $\Ss^r(\Om)$} is defined as the pull-back
\begin{equation} \label{eq:tau-n-Omr}
\tau^n_{\Om';r} := (\pi^{1/nr})^\ast L^n_\Om
\end{equation}
with the symmetric $n$-tensor $L^n_\Om$ on $\Ss^{1/n}(\Om)$ defined in (\ref{eq:canonical-nform}). The definition of the pullback in (\ref{eq:tensor-pullback}) and the formula for the Fr\'echet-derivative of $\pi^{1/nr}$ in (\ref{eq:deriv-pik}) now imply by a straightforward calculation that
\begin{equation} \label{def:canonical-tensor-r}
(\tau^n_{\Om;r})_{\mu_r}(\nu_1, \ldots, \nu_n) := \begin{cases} \displaystyle{\frac1{r^n} \int_\Om d(\nu_1 \cdot \ldots \cdot \nu_n \cdot |\mu_r|^{1/r - n})} & \mbox{if $r < 1/n$},\\[5mm]
L^n_\Om(\nu_1, \ldots, \nu_n) & \mbox{if $r = 1/n$}, \end{cases}
\end{equation}
where $\mu_r \in \Ss^r(\Om)$ and $\nu_i \in \Ss^r(\Om) = T_{\mu_r}\Ss^r(\Om)$.

Furthermore, if $(M, \Om, \pb)$ is a $k$-integrable parametrized measure model, $k := 1/r \geq n$, then we define the {\em canonical $n$-tensor field of $(M, \Om, \pb)$} as the pull-back
\begin{equation} \label{eq:tau-n-p}
\tau^n_{(M, \Om, \pb)} := (\pb^{1/k})^\ast \tau^n_{\Om;r} = (\pb^{1/n}) L^n_\Om.
\end{equation}
\end{definition}
In this case, (\ref{eq:formal-derivative}) implies that for $v_1, \ldots, v_n \in T_\xi M$
\begin{equation} \label{eq:tau-n-plog}
\tau^n_{(M, \Om, \pb)}(v_1, \ldots, v_n) = \int_\Om \p_{v_1} \log \pb(\xi) \cdots \p_{v_n} \log \pb(\xi)\; d\pb(\xi).
\end{equation}

\begin{example} \begin{enumerate}
\item
The canonical $1$-tensor of $(M, \Om, \pb)$ is given as
\[
\left(\tau^1_{(M, \Om, \pb)}\right)_\mu(v) = \int_\Om \p_{v_1} \log \pb(\xi)\; d\pb(\xi) = \p_v \|\pb(\xi)\|.
\]
Thus, on a statistical model (i.e., if $\pb(\xi) \in \Pp(\Om)$ for all $\xi$) $\tau^1_{(M, \Om, \pb)} \equiv 0$.

\item
The canonical $2$-tensor $\tau^2_{(M, \Om, \pb)}$ is called the {\em Fisher metric }of the model and is often simply denoted by $\g$. It is defined only if the model is $2$-integrable.
\item
The canonical $3$-tensor $\tau^3_{(M, \Om, \pb)}$ is called the {\em Amari-Chentsov tensor }of the model. It is often simply denoted by $\T$ and is defined only if the model is $3$-integrable.
\end{enumerate}
\end{example}

\section{Congruent families of tensor fields} \label{sec:congruent-general}

The question we wish to address in this section is to characterize families of $n$-tensor fields on $\Mm^r(\Om)$ (on $\Pp^r(\Om)$, respectively) for measurable spaces $\Om$ which are unchanged under congruent Markov morphisms. 

First of all, we need to clarify what is meant by this. The problem we have is that a given Markov kernel $K: \Om \to \Pp(\Om)$ induces the bounded linear Markov morphism $K_\ast: \Ss(\Om) \to \Ss(\Om')$ which maps $\Pp(\Om)$ and $\Mm(\Om)$ to $\Pp(\Om')$ and $\Mm(\Om')$, respectively, there is no induced differentiable map from $\Pp^r(\Om)$ and $\Mm^r(\Om)$ to $\Pp^r(\Om')$ and $\Mm^r(\Om')$, respectively, if $r < 1$. The best we can do is to make the following definition.

\begin{definition} \label{def:K*r}
Let $K: \Om \to \Pp(\Om')$ be a Markov kernel with the associated Markov morphism $K_\ast: \Ss(\Om) \to \Ss(\Om')$ from (\ref{eq:Markov-linear}). For $r \in (0,1]$ we define
\begin{equation} \label{eq:def-Kr}
K_r: \Ss^r(\Om) \to \Ss^r(\Om'), \qquad K_r := \pi^r K_\ast \pi^{1/r},
\end{equation}
which maps $\Pp^r(\Om)$ and $\Mm^r(\Om)$ to $\Pp^r(\Om')$ and $\Mm^r(\Om')$, respectively.
\end{definition}

Since $r \leq 1$, it follows that $\pi^{1/r}$ is a Fr\'echet-$C^1$-map and $K_\ast$ is linear. However, $\pi^r$ is continuous but not differentiable for $r < 1$, whence the same holds for $K_r$.

Nevertheless, let us for the moment pretend that $K_r$ was differentiable. Then, when rewriting (\ref{eq:def-Kr}) as $\pi^{1/r} K_r = K_\ast \pi^{1/r}$, the chain rule and (\ref{eq:formal-derivative}) would imply that
\begin{equation} \label{eq:diffK*r-1}
|K_r\mu_r|^{1/r-1} \cdot (d_{\mu_r} K_r \nu_r) = K_\ast (|\mu_r|^{1/r-1} \cdot \nu_r)
\end{equation}
for all $\mu_r, \nu_r \in \Ss^r(\Om)$. 

On the other hand, as $K_r$ maps $\Mm^r(\Om)$ to $\Mm^r(\Om')$, its differential at $\mu^r \in \Mm^r(\Om)$ for $\mu \in \Mm(\Om)$ would restrict to a linear map
\[
d_{\mu^r} K_r: T_{\mu^r}\Mm^r(\Om) = \Ss^r(\Om, \mu) \longrightarrow T_{{\mu'}^r}\Mm^r(\Om') = \Ss^r(\Om, \mu'),
\]
where $\mu' := K_\ast\mu \in \Mm(\Om')$. This together with (\ref{eq:diffK*r-1}) implies that the restriction of $d_{\mu_r} K_r$ to $\Ss^r(\Om, \mu)$ must be given as

\begin{equation} \label{eq:K-mu-r}
d_{\mu^r} K_r: \Ss^r(\Om, \mu) \longrightarrow \Ss^r(\Om', \mu'), \qquad d_{\mu^r} K_r(\phi \mu^r) = \dfrac{d\{K_*(\phi \mu)\}}{d\mu'}\; {\mu'}^r.
\end{equation}

Indeed, by \cite[Theorem 3.3]{AJLS2015}, (\ref{eq:K-mu-r}) defines a bounded linear map $d_{\mu^r} K_r$. In fact, it is shown in that reference that
\[
\left\|d_{\mu^r} K_r(\phi \mu^r)\right\|_{\Ss^r(\Om', \mu')} = \left\|\dfrac{d\{K_*(\phi \mu)\}}{d\mu'}\right\|_{L^{1/r}(\Om', \mu')} \leq \|\phi\|_{L^{1/r}(\Om, \mu)} = \|\phi \mu^r\|_{\Ss^r(\Om, \mu)}.
\]

\begin{definition} \label{def:formal-deriv}
For $\mu \in \Mm(\Om)$, the bounded linear map (\ref{eq:K-mu-r}) is called the {\em formal derivative of $K_r$ at $\mu$}.
\end{definition}

If $(M, \Om, \pb)$ is a $k$-integrable parametrized measure model, then so is $(M, \Om', \pb')$ with $\pb' := K_\ast \pb$ by Proposition \ref{prop:formal-p1/k}. In this case, we may also write
\begin{equation} \label{eq:p'1/k}
{\pb'}^{1/k} = K_{1/k} \pb^{1/k}.
\end{equation}

\begin{proposition} \label{prop:formal-deriv-p1/k}
The formal derivative of $K_r$ defined in (\ref{eq:K-mu-r}) satisfies the identity
\[
d_\xi {\pb'}^{1/k} = (d_{\pb(\xi)^{1/k}}K_{1/k}) (d_\xi \pb^{1/k})
\]
for all $\xi \in M$ which may be regarded as the chain rule applied to the derivative of (\ref{eq:p'1/k}).
\end{proposition}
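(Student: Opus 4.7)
The plan is to unfold both sides of the claimed identity using the explicit formula \eqref{eq:formal-derivative} for $d_\xi \pb^{1/k}$ together with the defining expression \eqref{eq:K-mu-r} for the formal derivative $d_{\mu^r}K_r$, and then to match the two using the linearity and boundedness of the Markov morphism $K_\ast$ together with the definition \eqref{eq:log-der} of the logarithmic derivative.

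First, I would fix $v \in T_\xi M$ and set $\mu := \pb(\xi)$, $\mu' := K_\ast \mu = \pb'(\xi)$, and $\phi := \frac{1}{k}\p_v \log \pb(\xi) \in L^k(\Om, \mu)$. By \eqref{eq:formal-derivative} we then have $d_\xi \pb^{1/k}(v) = \phi\, \mu^{1/k} \in \Ss^{1/k}(\Om, \mu)$, so applying \eqref{eq:K-mu-r} gives
\[
(d_{\mu^{1/k}}K_{1/k})\bigl(d_\xi \pb^{1/k}(v)\bigr) \;=\; \frac{d\{K_\ast(\phi \mu)\}}{d\mu'}\, {\mu'}^{1/k}.
\]

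Next, I would eliminate $\phi \mu$ via \eqref{eq:log-der}, which reads $(\p_v \log \pb(\xi))\cdot \pb(\xi) = d_\xi \pb(v)$ in $\Ss(\Om)$, so that $\phi \mu = \frac{1}{k}\, d_\xi \pb(v)$. Since $K_\ast \colon \Ss(\Om) \to \Ss(\Om')$ is bounded and linear, it commutes with the Fr\'echet derivative in $\xi$, giving
\[
K_\ast(\phi \mu) \;=\; \tfrac{1}{k}\, K_\ast\bigl(d_\xi \pb(v)\bigr) \;=\; \tfrac{1}{k}\, d_\xi (K_\ast \pb)(v) \;=\; \tfrac{1}{k}\, d_\xi \pb'(v).
\]
By Proposition \ref{prop:formal-p1/k} the model $\pb'$ is again $k$-integrable, so $\frac{d\{d_\xi \pb'(v)\}}{d\mu'} = \p_v \log \pb'(\xi)$, and a second application of \eqref{eq:formal-derivative}, now on the $\Om'$ side, yields
\[
(d_{\mu^{1/k}}K_{1/k})\bigl(d_\xi \pb^{1/k}(v)\bigr) \;=\; \tfrac{1}{k}(\p_v \log \pb'(\xi))\, {\mu'}^{1/k} \;=\; d_\xi {\pb'}^{1/k}(v),
\]
which is the required identity.

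The only subtle point, as far as I can see, is conceptual rather than computational: because $K_{1/k}$ is not Fr\'echet differentiable for $k > 1$, one cannot literally invoke the chain rule on \eqref{eq:p'1/k}. Instead one must verify that the bounded linear map \eqref{eq:K-mu-r}, introduced precisely as a surrogate derivative, actually plays the role of a derivative in the equation ${\pb'}^{1/k} = K_{1/k}\,\pb^{1/k}$. Once this is granted, the verification reduces to the two substitutions $(\p_v \log \pb(\xi))\cdot \pb(\xi) = d_\xi \pb(v)$ and its analogue for $\pb'$, combined with the interchange of $K_\ast$ with $d_\xi$.
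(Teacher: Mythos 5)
Your proof is correct and follows essentially the same route as the paper: evaluate the left-hand side via \eqref{eq:formal-derivative}, plug into \eqref{eq:K-mu-r}, use $\p_v\log\pb(\xi)\,\pb(\xi)=d_\xi\pb(v)$ and the linearity of $K_\ast$ to identify $K_\ast(d_\xi\pb(v))=d_\xi\pb'(v)$, and then read the result back through the logarithmic derivative of $\pb'$ and \eqref{eq:formal-derivative}. The only difference is cosmetic (introducing the abbreviation $\phi$ and explicitly citing Proposition \ref{prop:formal-p1/k} for the $k$-integrability of $\pb'$, which the paper uses tacitly).
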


\begin{proof} For $v \in T_\xi M$, $\xi \in M$ we calculate
\begin{eqnarray*}
(d_{\pb(\xi)^{1/k}}K_{1/k}) (d_\xi \pb^{1/k}(v)) & \stackrel{(\ref{eq:formal-derivative})}= & \dfrac1k (d_{\pb^{1/k}(\xi)}K_{1/k}) (\p_v \log \pb(\xi) \pb(\xi)^{1/k})\\
& \stackrel{(\ref{eq:K-mu-r})} = & \dfrac1k \dfrac{d\{K_*(\p_v \log \pb(\xi) \pb(\xi))\}}{d\{\pb'(\xi)\}} \pb'(\xi)^{1/k}\\
& = & \dfrac1k \dfrac{d\{K_*(d_\xi \pb(v))\}}{d\{\pb'(\xi)\}} \pb'(\xi)^{1/k}\\
& = & \dfrac1k \dfrac{d\{d_\xi \pb'(v)\}}{d\{\pb'(\xi)\}} \pb'(\xi)^{1/k}\\
& = & \dfrac1k \p_v \log \pb'(\xi)\; \pb'(\xi)^{1/k} \stackrel{(\ref{eq:formal-derivative})}= d_\xi {\pb'}^{1/k}(v),
\end{eqnarray*}
which shows the assertion.
\end{proof}

Our definition of formal derivatives is just strong enough to define the pullback of tensor fields on the space of probability measures in analogy to (\ref{eq:tensor-pullback}).

\begin{definition}[Pullback of tensors by a Markov morphism]
Let $K: \Om \to \Pp(\Om')$ be a Markov kernel, and let $\Psi^n$ be an $n$-tensor field on $\Mm^r(\Om')$ (on $\Pp^r(\Om')$, respectively), cf.\ Definition \ref{def:tensor-MMr}.
Then the {\em pull-back tensor under $K$} is defined as the covariant $n$-tensor $K_r^\ast\Psi^n$ on $\Mm^r(\Om)$ (on $\Pp^r(\Om)$, respectively) given as
\[
K_r^*\Psi^n (V_1, \ldots, V_n) := \Psi^n(d K_r(V_1), \ldots, dK_r(V_n))
\]
with the formal derivative $dK_r$ from (\ref{eq:K-mu-r}).
\end{definition}

Evidently, $K_r^*\Psi^n$ is again a covariant $n$-tensor on $\Pp^r(\Om)$ and $\Mm^r(\Om)$, respectively, since $dK_r$ is continuous. Moreover, Proposition \ref{prop:formal-deriv-p1/k} implies that for a parametrized measure model $(M, \Om, \pb)$ and the induced model $(M, \Om', \pb')$ with $\pb' = K_* \pb$ we have the identity
\begin{equation} \label{eq:p-Kr*}
{\pb'}^\ast\Psi^n = \pb^*K_r^\ast\Psi^n
\end{equation}
for any covariant $n$-tensor field $\Psi^n$ on $\Pp^r(\Om)$ or $\Mm^r(\Om)$, respectively.

With this, we can now give a definition of congruent families of tensor fields.

\begin{definition}[Congruent families of tensors]  \label{def:algebr-gen}
Let $r \in (0, 1]$, and let $(\Theta_{\Om;r}^n)$ be a collection of covariant $n$-tensors on $\Pp^r(\Om)$ (on $\Mm^r(\Om)$, respectively) for each measurable space $\Om$.

This collection is said to be a {\em congruent family of $n$-tensors of regularity $r$ }if for any congruent Markov kernel $K: \Om \to \Om'$ we have
\[
K_r^*\Theta_{\Om';r}^n = \Theta_{\Om;r}^n.
\]
\end{definition}

The following gives an important example of such families. 

\begin{proposition} \label{prop:Ln-cong}
The restriction of the canonical $n$-tensors $L^n_\Om$ (\ref{eq:canonical-nform}) to $\Pp^{1/n}(\Om)$ and $\Mm^{1/n}(\Om)$, respectively, yield a congruent family of $n$-tensors. Likewise, then canonical $n$-tensors $(\tau^n_{\Om;r})$ on $\Pp^r(\Om)$ and $\Mm^r(\Om)$, respectively, with $r \leq 1/n$ yield congruent families of $n$-tensors.
\end{proposition}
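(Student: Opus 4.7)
The plan is to establish congruence first for the canonical $L^n_\Om$ on $\Ss^{1/n}(\Om)$ by direct computation, and then to bootstrap to the family $(\tau^n_{\Om;r})$ with $r \leq 1/n$ by using the defining relation $\tau^n_{\Om;r} = (\pi^{1/(nr)})^\ast L^n_\Om$ together with a commutativity relation between $K_r$ and the exponentiation map $\pi^{1/(nr)}$.

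The main step is a fiber identity for congruent Markov kernels. If $K: \Om \to \Pp(\Om')$ is congruent with respect to a statistic $\kappa: \Om' \to \Om$, then $K(\om)$ is supported on $\kappa^{-1}(\om)$, since $\kappa_\ast K(\om) = \delta_\om$. A short unfolding of the definitions then yields, for any $\mu \in \Mm(\Om)$ and any bounded measurable $\phi: \Om \to \R$,
\[
K_\ast(\phi\,\mu) \;=\; (\phi \circ \kappa)\cdot K_\ast\mu.
\]
Combining this with the description (\ref{eq:K-mu-r}) of the formal derivative gives $dK_{1/n}(\phi\,\mu^{1/n}) = (\phi \circ \kappa)(K_\ast\mu)^{1/n}$. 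With this in hand, for $\nu_i = \phi_i\mu^{1/n} \in \Ss^{1/n}(\Om;\mu)$ I would compute $K_{1/n}^\ast L^n_{\Om'}$ directly, multiply the $dK_{1/n}(\nu_i)$ via (\ref{eq:product-concrete}), and then apply $\kappa_\ast K_\ast = \mathrm{Id}$ as a change of variables:
\[
n^n \int_{\Om'} \prod_{i=1}^n (\phi_i \circ \kappa)\, d(K_\ast\mu) \;=\; n^n \int_\Om \prod_{i=1}^n \phi_i\, d\mu \;=\; L^n_\Om(\nu_1, \ldots, \nu_n).
\]
This settles the claim for $L^n_\Om$.

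For the second claim, I would exploit the set-map identity $\pi^{1/(nr)} \circ K_r = K_{1/n} \circ \pi^{1/(nr)}$, which follows at once from Definition \ref{def:K*r} by writing both sides in terms of $K_\ast$ and the exponentials. Since $r \leq 1/n$ forces $1/(nr) \geq 1$, the map $\pi^{1/(nr)}$ is Fr\'echet-$C^1$, so its genuine chain rule can be combined with the formal-derivative prescription (\ref{eq:K-mu-r}) for $K_r$ to give a commutation $d\pi^{1/(nr)} \circ dK_r = dK_{1/n} \circ d\pi^{1/(nr)}$ on the tangent cones $\Ss^r(\Om;\mu)$; this is a direct check on directions $\phi\mu^r$ using (\ref{eq:deriv-pik}) and the fiber identity above. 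Pulling back through both factors then gives
\[
K_r^\ast \tau^n_{\Om';r} \;=\; (\pi^{1/(nr)})^\ast K_{1/n}^\ast L^n_{\Om'} \;=\; (\pi^{1/(nr)})^\ast L^n_\Om \;=\; \tau^n_{\Om;r},
\]
where the middle equality invokes the first part.

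The main obstacle I anticipate is not conceptual but notational: one must carefully keep track of the distinction between genuine Fr\'echet derivatives and the formal derivative of the non-differentiable map $K_r$, and verify that the chain rule still holds at this level of generality. All substance, however, is already packaged in the fiber identity $K_\ast(\phi\mu) = (\phi\circ\kappa)K_\ast\mu$, which is itself an immediate consequence of the support property $\kappa_\ast K(\om) = \delta_\om$ built into the definition of congruence.
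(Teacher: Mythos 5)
Your proposal is correct and follows essentially the same route as the paper: first show congruence of $L^n_\Om$ by computing $dK_{1/n}(\phi_i\mu^{1/n})$ via (\ref{eq:K-mu-r}), where congruence forces the transformed densities to be $\phi_i\circ\kappa$, and then conclude by the change of variables $\kappa_\ast K_\ast=\mathrm{Id}$; second, deduce the case $r\leq 1/n$ from the intertwining of $K_r$, $K_{1/n}$ with the power maps $\pi$ and functoriality of pullbacks. Your explicit fiber identity $K_\ast(\phi\mu)=(\phi\circ\kappa)K_\ast\mu$ and the direct check that formal derivatives commute with $d\pi^{1/(nr)}$ are just cleaner spellings of steps the paper's proof uses implicitly.
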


\begin{proof}
Let $K: \Om \to \Pp(\Om')$ be a Markov kernel which is congruent w.r.t. the statistic $\kappa: \Om' \to \Om$ (cf. Definition \ref{def:CongMarkov}). For $\mu \in \Mm(\Om)$ let $\mu' := K_\ast \mu \in \Mm(\Om')$, so that $\kappa_\ast \mu' = \kappa_\ast K_\ast \mu = \mu$. Let $\nu_{1/n}^i = \phi_i {\mu}^{1/n} \in T_{\mu^r}\Mm^{1/n}(\Om) = \Ss^{1/n}(\Om, \mu')$, with $\phi_i \in L^{1/n}(\Om, \mu)$, $i=1, \ldots, n$, and define $\phi_i' \in L^{1/n}(\Om', \mu')$ by
\[
K_\ast(\phi_i \mu) = \phi' \mu'.
\]
By the $\kappa$-congruency of $K$, this implies that
\[
\phi_i \mu = \kappa_\ast K_\ast(\phi_i \mu) = \kappa_*(\phi_i' \mu') = (\kappa^\ast \phi_i') \kappa_\ast \mu' = (\kappa^\ast \phi_i') \kappa_\ast K_\ast \mu = (\kappa^\ast \phi_i') \mu,
\]
where $\kappa^\ast \phi(\cdot) := \phi(\kappa(\cdot))$, so that
\[
\kappa^\ast \phi'_i = \phi_i.
\]
Then
\begin{eqnarray*}
(K_{1/n}^*L_{\Om'}^n)_{\mu^{1/n}}(\nu_{1/n}^1, \ldots \nu_{1/n}^n) & = & L_{\Om'}^n \left((d_{\mu^{1/n}}K_{1/n}) (\phi_1 \mu^{1/n}), \ldots, (d_{\mu^{1/n}}K_{1/n}) (\phi_n \mu^{1/n})\right)\\
& \stackrel{(\ref{eq:K-mu-r})}= & L_{\Om'}^n (\phi_1' {\mu'}^{1/n}, \ldots, \phi_n' {\mu'}^{1/n})\\
& \stackrel{(\ref{eq:canonical-nform})}= & n^n \int_{\Om'} \phi_1' \cdots \phi_n' d\mu'\\
& = & n^n \int_\Om \kappa^\ast(\phi_1' \cdots \phi_n') d(\kappa_\ast \mu')\\
& = & n^n \int_\Om \phi_1 \cdots \phi_n d\mu = L^n_\Om(\nu_{1/n}^1, \ldots, \nu_{1/n}^n).
\end{eqnarray*}
This shows that $(L^n_\Om)$ is a congruent family of $n$-tensors. For $r \leq 1/n$, observe that by (\ref{eq:def-Kr}) we have
\[
K_r = \pi^{rn} K_{1/n} \pi^{1/rn} \Longrightarrow K_r^\ast = (\pi^{1/rn})^\ast K_{1/n}^\ast (\pi^{rn})^\ast
\]
and hence,
\[
K_r^\ast \tau^n_{\Om';r} \stackrel{(\ref{eq:tau-n-Omr})} = (\pi^{1/rn})^\ast K_{1/n}^\ast (\pi^{rn})^\ast (\pi^{1/rn})^\ast L^n_{\Om'} = (\pi^{1/rn})^\ast K_{1/n}^\ast L^n_{\Om'} = (\pi^{1/rn})^\ast L^n_\Om \stackrel{(\ref{eq:tau-n-Omr})} = \tau^n_{\Om;r},
\]
showing the congruency of the family $\tau^n_{\Om;r}$ as well.
\end{proof}

By (\ref{eq:pullback-homom}) and Definition \ref{def:algebr-gen}, it follows that tensor products and permutations of congruent families of tensors yield again such families. Moreover, since
\[
\|K_r\ast(\mu_r)\|_{\Ss^r(\Om')} = \|K_\ast \mu_r^{1/r}\|_{\Ss(\Om')} \stackrel{(\ref{eq:Markov-preserve})}= \|\mu_r^{1/r}\|_{\Ss(\Om)},
\]
multiplying a congruent family with a continuous function depending only on $\|\mu_r^{1/r}\|_{\Ss(\Om)} = \|\mu_r^{1/r}\|$ yields again a congruent family of tensors. Therefore, defining for a partition ${\bf P} \in {\bf Part}(n)$ with the associated map $\pi_{\bf P}$ from (\ref{eq:pi-P}) the tensor $\tau^{\bf P} \in \otimes^n V^\ast$ as
\begin{equation} \label{eq:cantens-P-Om}
(\tau_{\Om;r}^{\bf P})_{\mu_r}(v_1, \ldots, v_n) := \prod_{i=1}^r (\tau_{\Om;r}^{n_i})_{\mu_r}(v_{\pi_{\bf P}(i, 1)}, \ldots, v_{\pi_{\bf P}(i, n_i)}),
\end{equation}
this together with Proposition \ref{prop:generated-algebra} yields the following.

\begin{proposition} \label{prop:alg-gen}
For $r \in (0,1]$,
\begin{equation} \label{eq:congruent-form-r}
(\tilde \Theta^n_{\Om;r})_{\mu_r} = \sum_{\bf P} a_{\bf P}(\|\mu_r^{1/r}\|) (\tau^{\bf P}_{\Om;r})_{\mu_r},
\end{equation}
is a congruent family of $n$-tensor fields on $\Mm^r(\Om)$, where the sum is taken over all partitions ${\bf P} = \{P_1, \ldots, P_l\} \in {\bf Part}(n)$ with $|P_i| \leq 1/r$ for all $i$, and where $a_{\bf P}: (0, \infty) \to \R$ are continuous functions. Furthermore, 
\begin{equation} \label{eq:congruent-form-r-P}
\Theta^n_{\Om;r} = \sum_{\bf P} c_{\bf P} \tau^{\bf P}_{\Om;r},
\end{equation}
is a congruent family of $n$-tensor fields on $\Pp^r(\Om)$, where the sum is taken over all partitions ${\bf P} = \{P_1, \ldots, P_l\} \in {\bf Part}(n)$ with $1 < |P_i| \leq 1/r$ for all $i$, and where the $c_{\bf P} \in \R$ are constants.
\end{proposition}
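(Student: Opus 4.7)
The plan is to verify both assertions directly by assembling three facts already at our disposal: (i) each canonical tensor $\tau^n_{\Om;r}$ with $n \leq 1/r$ forms a congruent family (Proposition \ref{prop:Ln-cong}); (ii) by (\ref{eq:pullback-homom}) the pullback $K_r^\ast$ commutes with tensor products and with the $S_n$-action on tensors; (iii) Markov morphisms preserve total mass of nonnegative measures, so $\|(K_r\mu_r)^{1/r}\| = \|K_\ast \mu_r^{1/r}\| = \|\mu_r^{1/r}\|$ by (\ref{eq:Markov-preserve}).

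First I would show that each $\tau^{\bf P}_{\Om;r}$ is itself a congruent family. By the defining formula (\ref{eq:cantens-P-Om}), $\tau^{\bf P}_{\Om;r}$ equals -- up to a permutation of arguments reordering the blocks of ${\bf P}$ into consecutive intervals -- the tensor product $\tau^{n_1}_{\Om;r} \otimes \cdots \otimes \tau^{n_l}_{\Om;r}$, where $n_i = |P_i| \leq 1/r$ by hypothesis, so that every factor is well defined as a tensor on $\Ss^r(\Om)$. Combining (i) with (ii) then yields $K_r^\ast \tau^{\bf P}_{\Om';r} = \tau^{\bf P}_{\Om;r}$ for any congruent Markov kernel $K \colon \Om \to \Pp(\Om')$.

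Next, for (\ref{eq:congruent-form-r}), I would verify that multiplying a congruent family on $\Mm^r$ by a continuous function of the total mass preserves congruence. For $\mu_r \in \Mm^r(\Om)$ with image $\mu'_r := K_r \mu_r$, the pullback gives
\begin{equation*}
\bigl(K_r^\ast [a_{\bf P}(\|\cdot^{1/r}\|)\, \tau^{\bf P}_{\Om';r}]\bigr)_{\mu_r} \;=\; a_{\bf P}(\|{\mu'_r}^{1/r}\|)\, (K_r^\ast \tau^{\bf P}_{\Om';r})_{\mu_r} \;=\; a_{\bf P}(\|\mu_r^{1/r}\|)\, (\tau^{\bf P}_{\Om;r})_{\mu_r},
\end{equation*}
where the first equality pulls the scalar outside and the second invokes (iii) together with the congruence of $\tau^{\bf P}_{\Om;r}$ from the previous paragraph. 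Summing over ${\bf P}$ is a pointwise linear operation that preserves congruence, establishing (\ref{eq:congruent-form-r}). For (\ref{eq:congruent-form-r-P}), the same argument applies with $\|\mu_r^{1/r}\| \equiv 1$ on $\Pp^r(\Om)$, so the weights collapse to constants; the side condition $|P_i| > 1$ is merely cosmetic, since for tangent vectors $\nu_i = \phi_i \mu^r \in \Ss^r_0(\Om;\mu) = T_{\mu^r}\Pp^r(\Om)$ formula (\ref{def:canonical-tensor-r}) gives $(\tau^1_{\Om;r})_{\mu_r}(\nu_i) = \tfrac{1}{r}\E_\mu(\phi_i) = 0$, so any partition containing a singleton block contributes zero and may be dropped from the sum.

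The argument is essentially bookkeeping and no step presents a genuine obstacle, as all the substantive work is carried by Proposition \ref{prop:Ln-cong}. The one point deserving care is that under $K_r^\ast$ the mass-weight is evaluated at the \emph{image} $K_r\mu_r$ rather than at $\mu_r$ itself, which is precisely why fact (iii) -- mass preservation of Markov morphisms on $\Mm(\Om)$ -- is needed to close the calculation.
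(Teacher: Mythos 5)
Your proposal is correct and follows essentially the same route as the paper, which obtains the proposition by combining the congruence of the canonical tensors (Proposition \ref{prop:Ln-cong}), closure of congruent families under tensor products and permutations via (\ref{eq:pullback-homom}), and closure under multiplication by continuous functions of the preserved total mass $\|\mu_r^{1/r}\|$, together with Proposition \ref{prop:generated-algebra}. Your explicit handling of the weight being evaluated at $K_r\mu_r$ and your remark on singleton blocks vanishing on $\Pp^r(\Om)$ are just more detailed renderings of the same argument.
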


In the light of Proposition \ref{prop:generated-algebra}, it is reasonable to use the following terminology.

\begin{definition} \label{def:alg-gen}
The congruent families of $n$-tensors on $\Mm^r(\Om)$ and $\Pp^r(\Om)$ given in (\ref{eq:congruent-form-r}) and (\ref{eq:congruent-form-r-P}), respectively, are called the families which are {\em algebraically generated by the canonical tensors.}
\end{definition}

\section{Congruent families on finite sample spaces} \label{sec:congruent-finite}

In this section, we wish to apply our discussion of the previous sections to the case where the sample space $\Om$ is assumed to be a finite set, in which case it is denoted by $I$ rather than $\Om$.

The simplification of this case is due to the fact that in this case the spaces $\Ss^r(I)$ are finite dimensional. Indeed, we have
\begin{equation}\label{def-S(I)}
\begin{array}{lrllrl}
\Ss(I) & = & \left\{\mu = \sum_{i \in I} \mu_i \delta_i \mid \mu_i \in \R \right\},\\[2mm]
\Mm(I) & = & \left\{ \mu \in \Ss(I) \mid \mu_i \geq 0\right\}, &  \Pp(I) & = & \left\{ \mu \in \Ss(I) \mid \mu_i \geq 0, \sum_i \mu_i = 1\right\},\\[2mm]
\Mm_+(I) & := & \left\{ \mu \in \Ss(I)  \mid \mu_i > 0\right\}, & \Pp_+(I) & := & \left\{ \mu \in \Ss(I)  \mid \mu_i > 0, \sum_i \mu_i = 1\right\},
\end{array}
\end{equation}
where $\delta_i$ denotes the Dirac measure supported at $i \in I$. The norm on $\Ss(I)$ is then
\[
\left\|\sum_{i \in I} \mu_i \delta_i\right\| = \sum_{i \in I} |\mu_i|.
\]
The space $\Ss^r(I)$ is then given as
\begin{equation}\label{def-Sr(I)}
\begin{array}{lrllrl}
\Ss^r(I) & = & \left\{\mu_r = \sum_{i \in I} \mu_i \delta_i^r \mid \mu_i \in \R \right\},\\[2mm]
\Mm^r(I) & = & \left\{ \mu_r \in \Ss^r(I) \mid \mu_i \geq 0\right\}, &  \Pp(I) & = \left\{ \mu_r \in \Ss^r(I) \mid \mu_i \geq 0, \sum_i \mu_i^{1/r} = 1\right\},\\[2mm]
\Mm_+^r(I) & = & \left\{ \mu_r \in \Ss^r(I)  \mid \mu_i > 0\right\}, & \Pp_+(I) & = \left\{ \mu_r \in \Ss^r(I)  \mid \mu_i > 0, \sum_i \mu_i^{1/r} = 1\right\}.
\end{array}
\end{equation}
The sets $\Mm_+(I)$ and $\Pp_+(I) \subset \Ss(I)$ are manifolds of dimension $|I|$ and $|I|-1$, respectively. Indeed, $\Mm_+(I) \subset \Ss(I)$ is an open subset, whereas $\Pp_+(I)$ is an open subset of the affine hyperplane $\Ss_1(I)$, cf (\ref{eq:def-SsOm}). In particular, we have
\[
T_\mu \Pp_+(I) = \Ss_0(I) \qquad \mbox{and} \qquad T_\mu \Mm_+(I) = \Ss(I).
\]
The norm on $\Ss^r(I)$ is given as
\[
\left\|\sum_{i \in I} \mu_i \delta_i^r\right\|_{\Ss^r(I)} = \sum_{i \in I} |\mu_i|^{1/r},
\]
and the product $\cdot$ and the exponentiating map $\pi^k: \Ss^r(I) \to \Ss^{kr}(I)$ from above are given as
\begin{equation} \label{eq:prod-pi-I}
\left(\sum_i \mu_i \delta_i^r\right) \cdot \left(\sum_i \nu_i \delta_i^s\right) = \sum_i \mu_i \nu_i \delta_i^{r+s}, \qquad \pi^k \left(\sum_{i \in I} \mu_i \delta_i^r\right) = \sum_i \sign(\mu_i) |\mu_i|^k \delta_i^{kr}.
\end{equation}
Evidently, $\pi^k$ maps $\Mm_+^r(I)$ and $\Pp_+^r(I)$ to $\Mm_+^{kr}(I)$ and $\Pp_+^{kr}(I)$, respectively, and the restriction of $\pi^k$ to these sets is differentiable even if $k < 1$.

A Markov kernel between the finite sets $I = \{1, \ldots, m\}$ and $I' = \{1, \ldots, n\}$ is determined by the $(n \times m)$-Matrix $(K^i_{i'})_{i, i'}$ by
\[
K(\delta_i) = K^i = \sum_{i'} K_{i'}^i \delta_{i'},
\]
where $K_{i'}^i \geq 0$ and $\sum_{i'} K_{i'}^i = 1$ for all $i \in I$. Therefore, by linearity,
\[
K_*\left(\sum_i x_i \delta_i\right) = \sum_{i, i'} K_{i'}^i x_i \delta_{i'}.
\]
In particular, $K_\ast(\Pp_+(I)) \subset K_\ast(\Pp_+(I'))$ and $K_\ast(\Mm_+(I)) \subset K_\ast(\Mm_+(I'))$.

If $\kappa: I' \to I$ is a statistic between finite sets (cf. Example \ref{ex:congstat}) and if we denote the induced partition by $A_i := \kappa^{-1}(i) \subset I'$, then a Markov kernel $K: I \to \Pp(I')$ given by the matrix $(K^i_{i'})_{i, i'}$ as above is $\kappa$-congruent if and only if
\[
K^i_{i'} = 0 \qquad \mbox{whenever $i' \notin A_i$.}
\]

Since $(\delta_i^r)_{i \in I}$ is a basis of $\Ss^r(\Om)$, we can describe any $n$-tensor $\Psi^n$ on $\Ss^r(I)$ by defining for all multiindices $\vec i := (i_1, \ldots, i_n) \in I^n$ the component functions
\begin{equation} \label{eq:tens-comp}
\psi^{\vec i}(\mu_r) := (\Psi^n)_{\mu_r}(\delta_{i_1}^r, \ldots, \delta_{i_n}^r) =: (\Psi^n)_{\mu_r}(\delta_{\vec i}),
\end{equation}
which are real valued functions depending continuously on $\mu_r \in \Ss^r(I)$. Thus, by (\ref{def:canonical-tensor-r}), the component functions of the canonical tensor $\tau^n_{\Om;r}$ from (\ref{eq:tens-comp}) are given as
\begin{equation} \label{eq:can-tens-comp}
\mu_r = \sum_{i \in I} m_i \delta_i^r \in \Ss^r(I) \qquad \Longrightarrow \qquad \theta^{\vec i}_{I; r}(\mu_r) = \left\{ \begin{array}{cl} |m_i|^{1/r-n} & \mbox{if $\vec i = (i, \ldots, i)$,}\\ \\ 0 & \mbox{otherwise.} \end{array} \right.
\end{equation}

\begin{remark}
Observe that $\theta^{\vec i}_{I;r}$ is continuous on $\Mm_+^r(I)$ and hence $\tau^n_{I;r} = (\pi^{1/nr})^\ast L^n_I$ is well-defined on $\Mm^r_+(I)$ even if $ r > 1/n$, as on this set $m_i > 0$. This reflects the fact that the restriction $\pi^{1/nr}: \Mm^r_+(I) \to \Ss^{1/n}(I)$ is differentiable for {\em any }$r > 0$ by (\ref{eq:prod-pi-I}).

In particular, for $r = 1$, when restricting to $\Mm_+(I)$ or $\Pp_+(I)$, the canonical tensor fields
\[
(\tau^n_{I;1}) =: (\tau^n_I) \qquad \mbox{and} \qquad (\tau^{\bf P}_{I;1}) =: (\tau^{\bf P}_I)
\]
yield a congruent family of $n$-tensors on $\Mm_+(I)$ and $\Pp_+(I)$, respectively, as is verified as in the proof of Proposition \ref{prop:Ln-cong}. Therefore, the families of $n$-tensor fields
\begin{equation} \label{eq:congruent-form-rI}
(\tilde \Theta^n_I)_{\mu_r} = \sum_{{\bf P} \in {\bf Part}(n)} a_{\bf P}(\|\mu_r^{1/r}\|) (\tau^{\bf P}_I)_{\mu_r},
\end{equation}
on $\Mm_+(I)$ and
\begin{equation} \label{eq:congruent-form-r-PI}
\Theta^n_I = \sum_{{\bf P} \in {\bf Part}(n), |P_i| > 1} c_{\bf P} \tau^{\bf P}_I
\end{equation}
on $\Pp_+(I)$ are congruent, where in contrast to (\ref{eq:congruent-form-r}) and (\ref{eq:congruent-form-r-P}) we need not restrict the sum to partitions with $|P_i| \leq 1/r$ for all $i$. In analogy to Definition \ref{def:alg-gen} we call these the families of congruent tensors {\em algebraically generated by the canonical $n$-tensors $\{\tau_I^n\}$}.
\end{remark}

The main result of this section (Theorem \ref{thm:Chentsov-finite}) will be that (\ref{eq:congruent-form-rI}) and (\ref{eq:congruent-form-r-PI}) are the only families of congruent $n$-tensor fields which are defined on $\Mm_+(I)$ and $\Pp_+(I)$, respectively, for all {\em finite }sets $I$. In order to do this, we first deal with congruent families on $\Mm_+(I)$ only.

A multiindex $\vec i = (i_1, \ldots, i_n) \in I^n$ induces a partition ${\bf P}(\vec i)$ of the set $\{1, \ldots, n\}$ into the equivalence classes of the relation $k \sim l \Leftrightarrow i_k = i_l$. For instance, for $n = 6$ and pairwise distinct elements $i, j, k \in I$, the partition induced by $\vec i := (j, i, i, k, j, i)$ is
\[
{\bf P}(\vec i) = \{\{1, 5\}, \{2, 3, 6\}, \{4\} \}.
\]

Since the canonical $n$-tensors $\tau^n_I$ are symmetric by definition, it follows that for any partition ${\bf P} \in {\bf Part}(n)$ we have by (\ref{eq:cantens-P-Om})
\begin{equation} \label{eq:tau-P-order}
(\tau^{\bf P}_I)_\mu(\delta_{\vec i}) \neq 0 \Longleftrightarrow {\bf P} \leq {\bf P}(\vec i).
\end{equation}

\begin{lemma} \label{lem:cantens-P}
In (\ref{eq:congruent-form-rI}) and (\ref{eq:congruent-form-r-PI}) above, $a_{\bf P}: (0, \infty) \to \R$ and $c_{\bf P}$ are uniquely determined.
\end{lemma}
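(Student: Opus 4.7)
I plan to show, equivalently, that if $\sum_{{\bf P}} a_{\bf P}(\|\mu\|)(\tau^{\bf P}_I)_\mu \equiv 0$ on $\Mm_+(I)$ then every $a_{\bf P}$ is the zero function, and if $\sum_{{\bf P},\,|P_i|\geq 2} c_{\bf P}(\tau^{\bf P}_I)_\mu \equiv 0$ on $\Pp_+(I)$ then every $c_{\bf P}=0$, taking $|I|$ sufficiently large (say $|I|\geq n+1$). The central tool is (\ref{eq:tau-P-order}) together with the explicit value $(\tau^{\bf P}_I)_\mu(\delta_{\vec i}) = \prod_{m=1}^{|{\bf P}|} m_{j_m}^{1-n_m}$ whenever ${\bf P}\leq {\bf P}(\vec i)$, where $j_m$ is the constant value of $\vec i$ on the $m$-th block.

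For the $\Mm_+(I)$ case I induct on ${\bf P}_0 \in {\bf Part}(n)$ in the refinement order, starting from the singleton partition. At each step I choose a multi-index $\vec i$ with ${\bf P}(\vec i)={\bf P}_0$; by (\ref{eq:tau-P-order}) only terms with ${\bf P}\leq {\bf P}_0$ survive the evaluation at $\delta_{\vec i}$, and by the inductive hypothesis those with ${\bf P}\lneq {\bf P}_0$ already contribute zero. What remains is $a_{{\bf P}_0}(\|\mu\|)\prod_m m_{j_m}^{1-n_m}=0$, and since the product is strictly positive on $\Mm_+(I)$ while every $t\in(0,\infty)$ is realized as $\|\mu\|$ for some $\mu$ having prescribed values of the $m_{j_m}$, we conclude $a_{{\bf P}_0}\equiv 0$.

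The $\Pp_+(I)$ case requires tangent vectors in $\Ss_0(I)$, so I replace $\delta_{i_k}$ by $e_{i_k}:=\delta_{i_k}-\delta_{i_0}$ for a fixed auxiliary index $i_0\in I$ distinct from the $i_k$. Multilinear expansion yields, for each block $Q_t$ of ${\bf P}$ of size $q_t\geq 2$, the factor $m_a^{1-q_t}+(-1)^{q_t}m_{i_0}^{1-q_t}$ if the block indices are all equal to some $a$, and $(-1)^{q_t}m_{i_0}^{1-q_t}$ otherwise. The main obstacle is that now every ${\bf P}$ produces a nonzero contribution, so the naive triangular structure is lost. My remedy is to extract the coefficient of $m_{i_0}^0$ from the resulting Laurent polynomial in $m_{i_0}$: a direct computation — using that $q_t-1\geq 1$ for every block, which forces any proper subset in the binomial expansion to yield a strictly negative power of $m_{i_0}$ — shows that this coefficient vanishes unless ${\bf P}\leq {\bf P}(\vec i)$, and in that case it equals $\prod_m m_{j_m}^{r_m-n_m}$, where $r_m$ is the number of blocks of ${\bf P}$ contained in the $m$-th block of ${\bf P}(\vec i)$. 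For $|I|$ large enough, extra dummy indices in $I$ absorb the constraint $\sum_i m_i=1$, leaving $m_{i_0}$ and the $m_{j_m}$ jointly free in an open subset of parameter space, so the vanishing of the $m_{i_0}^0$-coefficient holds as a polynomial identity in the $m_{j_m}$. Running the $\Mm_+$-type induction (noting that for ${\bf P}={\bf P}_0$ all $r_m=1$, so the product $\prod_m m_{j_m}^{1-n_m}$ remains nonzero) then yields $c_{{\bf P}_0}=0$ for every ${\bf P}_0$ with all blocks of size $\geq 2$.
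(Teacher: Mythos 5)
Your proof is correct; the $\Mm_+(I)$ half is essentially the paper's own argument (choose ${\bf P}_0$ minimal/inductively, evaluate on $\delta_{\vec i}$ with ${\bf P}(\vec i)={\bf P}_0$, and use the triangularity (\ref{eq:tau-P-order}) to isolate $a_{{\bf P}_0}$), but for the $\Pp_+(I)$ half you take a genuinely different route. The paper preserves the triangular structure by enlarging the sample space to $J=\{0,1,2\}\times I$ and using $V_i=2\delta_{(0,i)}-\delta_{(1,i)}-\delta_{(2,i)}$ at the uniform measure: since all Dirac measures inside $V_i$ lie over the same index $i$, the analogue (\ref{eq:V_i}) of (\ref{eq:tau-P-order}) still holds, and nonvanishing on the diagonal reduces to $2^{k}+2(-1)^{k}>0$ for $k\geq 2$. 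You instead stay inside a single large $I$, use the more obvious vectors $\delta_{i_k}-\delta_{i_0}\in\Ss_0(I)$, accept that triangularity is lost, and recover it by extracting the $m_{i_0}^{0}$-coefficient of the resulting Laurent polynomial; this is legitimate because, with a dummy index absorbing the constraint $\sum_i m_i=1$, the vanishing holds on an open set of the variables $(m_{i_0},m_{j_1},\dots,m_{j_s})$ and hence coefficientwise, and your evaluation of that coefficient ($\prod_m m_{j_m}^{r_m-n_m}$ when ${\bf P}\leq{\bf P}(\vec i)$, zero otherwise, since every block has size $\geq 2$ and so every $i_0$-term carries a strictly negative exponent) is correct; minimality of ${\bf P}_0$ then isolates $c_{{\bf P}_0}\prod_m m_{j_m}^{1-n_m}$ and forces $c_{{\bf P}_0}=0$. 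In terms of trade-offs, the paper's tripling trick needs no polynomial-identity reasoning and works pointwise at the center $c_J$, while your version keeps the sample space fixed (only requiring $|I|$ large, and your bound $|I|\geq n+1$ indeed suffices since blocks have size $\geq 2$) and works at arbitrary $\mu\in\Pp_+(I)$, at the cost of the Laurent-coefficient bookkeeping.
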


\begin{proof}
To show the first statement, let us assume that there are functions $a_{\bf P}: (0, \infty) \to \R$ such that
\begin{equation} \label{eq:aP-unique}
\sum_{{\bf P} \in {\bf Part}(n)} a_{\bf P}(\|\mu\|) (\tau^{\bf P}_{I;\mu}) = 0
\end{equation}
for all finite sets $I$ and $\mu \in \Mm_+(I)$, but there is a partition ${\bf P}_0$ with $a_{{\bf P}_0} \not \equiv 0$. In fact, we pick ${\bf P}_0$ to be minimal with this property, and choose a multiindex $\vec i \in I^n$ with ${\bf P}(\vec i) = {\bf P}_0$. Then
\begin{eqnarray*}
0 & = & \sum_{{\bf P} \in {\bf Part}(n)} a_{\bf P}(\|\mu\|) (\tau^{\bf P}_I)_\mu(\delta_{\vec i}) \stackrel{(\ref{eq:tau-P-order})}= \sum_{{\bf P} \leq {\bf P}_0} a_{\bf P}(\|\mu\|) (\tau^{\bf P}_I)_\mu(\delta_{\vec i})\\
& = & a_{{\bf P}_0}(\|\mu\|) (\tau^{{\bf P}_0}_I)_\mu(\delta_{\vec i}),
\end{eqnarray*}
where the last equation follows since $a_{\bf P} \equiv 0$ for ${\bf P} < {\bf P}_0$ by the minimality assumption on ${\bf P}_0$.

But $(\tau^{{\bf P}_0}_I)_\mu(\delta_{\vec i}) \neq 0$ again by (\ref{eq:tau-P-order}), since ${\bf P}(\vec i) = {\bf P}_0$, so that $a_{{\bf P}_0}(\|\mu\|) = 0$ for all $\mu$, contradicting $a_{{\bf P}_0} \not \equiv 0$.

Thus, (\ref{eq:aP-unique}) occurs only if $a_{\bf P} \equiv 0$ for all ${\bf P}$, showing the uniqueness of the functions $a_{\bf P}$ in (\ref{eq:congruent-form-rI}).

The uniqueness of the constants $c_{\bf P}$ in  (\ref{eq:congruent-form-r-PI}) follows similarly, but we have to account for the fact that $\delta_i \notin \Ss_0(I) = T_\mu \Pp_+(I)$. In order to get around this, let $I$ be a finite set and $J := \{0,1,2\} \times I$. For $i \in I$, we define
\[
V_i := 2 \delta_{(0,i)} - \delta_{(1,i)} - \delta_{(2, i)} \in \Ss_0(J),
\]
and for a multiindex $\vec i = (i_1, \ldots, i_n) \in I^n$ we let
\[
(\tau^{\bf P}_J)_\mu(V^{\vec i}) := (\tau^{\bf P}_J)_\mu (V_{i_1}, \ldots V_{i_n}).
\]
Multiplying this term out, we see that $(\tau^{\bf P}_J)_\mu(V^{\vec i})$ is a linear combination of terms of the form $(\tau^{\bf P}_J)_\mu (\delta_{(a_1, i_1)}, \ldots, \delta_{(a_n, i_n)})$, where $a_i \in \{0,1,2\}$. Thus, from (\ref{eq:tau-P-order}) we conclude that
\begin{equation} \label{eq:V_i}
(\tau^{\bf P}_J)_\mu(V^{\vec i}) \neq 0 \qquad \mbox{only if ${\bf P} \leq {\bf P}(\vec i)$}.
\end{equation}
Moreover, if ${\bf P}(\vec i) = \{P_1, \ldots, P_r\}$ with $|P_i| = k_i$, and $\mu_0 := 1/|J| \sum \delta_{(a,i)} \in \Pp_+(J)$, then 

\begin{eqnarray*}
(\tau^{k_i}_J)_{\mu_0} (V_i, \ldots, V_i) & \stackrel{(\ref{eq:can-tens-comp})}= & 2^{k_i} (\tau^{k_i}_J)_{\mu_0}(\delta_{(0,i)}, \ldots, \delta_{(0,i)})\\
&& \; + (-1)^{k_i} (\tau^{k_i}_J)_{\mu_0}(\delta_{(1,i)}, \ldots, \delta_{(1,i)}) + (-1)^{k_i} (\tau^{k_i}_J)_{\mu_0}(\delta_{(2,i)}, \ldots, \delta_{(2,i)})\\
& \stackrel{(\ref{eq:can-tens-comp})}= & (2^{k_i} + 2 (-1)^{k_i}) |J|^{k_i-1}.
\end{eqnarray*}

Thus, by (\ref{eq:cantens-P}) we have
\[
(\tau^{{\bf P}(\vec i)}_J)_{\mu_0}(V^{\vec i}) = \prod_{i=1}^r (\tau^{k_i}_J)_{\mu_0} (V_i, \ldots, V_i) = \prod_{i=1}^r (2^{k_i} + 2 (-1)^{k_i}) |J|^{k_i-1} = |J|^{n - r} \prod_{i=1}^r (2^{k_i} + 2 (-1)^{k_i}).
\]
In particular, since $2^{k_i} + 2 (-1)^{k_i} > 0$ for all $k_i \geq 2$ we conclude that
\begin{equation} \label{eq:V_i2}
(\tau^{{\bf P}(\vec i)}_J)_{\mu_0}(V^{\vec i}) \neq 0,
\end{equation}
as long as ${\bf P}(\vec i)$ does not contain singleton set.

With this, we can now proceed as in the previous case: assume that
\begin{equation} \label{eq:cP-unique}
\sum_{{\bf P} \in {\bf Part}(n), |P_i| \geq 2} c_{\bf P} \; \tau^{\bf P}_I = 0 \qquad \mbox{when restricted to $\Pp_+(I)$}
\end{equation}
for constants $c_{\bf P}$ which do not all vanish, and we let ${\bf P}_0$ be minimal with $c_{{\bf P}_0} \neq 0$. Let $\vec i = (i_1, \ldots, i_n) \in I^n$ be a multiindex with ${\bf P}(\vec i) = {\bf P}_0$, and let $J := \{0,1,2\} \times I$ be as above. Then
\begin{eqnarray*}
0 & = & \sum_{{\bf P} \in {\bf Part}(n), |P_i| \geq 2} c_{\bf P}\; (\tau^{\bf P}_J)_{\mu_0}(V^{\vec i}) \stackrel{(\ref{eq:V_i})}= \sum_{{\bf P} \leq {\bf P}_0, |P_i| \geq 2} c_{\bf P}\; (\tau^{\bf P}_J)_{\mu_0}(V^{\vec i})\\
& = & c_{{\bf P}_0}\; (\tau^{{\bf P}_0}_J)_{\mu_0}(V^{\vec i}),
\end{eqnarray*}
where the last equality follows by the assumption that ${\bf P}_0$ is minimal. But $(\tau^{{\bf P}_0}_J)_\mu(V^{\vec i}) \neq 0$ by (\ref{eq:V_i2}), whence $c_{{\bf P}_0} = 0$, contradicting the choice of ${\bf P}_0$.

This shows that (\ref{eq:cP-unique}) can happen only if all $c_{\bf P} = 0$, and this completes the proof.
\end{proof}

The main result of this section is the following.

\begin{theorem} \label{thm:Chentsov-finite} (Classification of congruent families of $n$-tensors)

The class of congruent families of $n$-tensors on $\Mm_+(I)$ and $\Pp_+(I)$, respectively, for finite sets $I$ is the class algebraically generated by the canonical $n$-tensors $\{\tau_I^n\}$. That is, these families are the ones given in (\ref{eq:congruent-form-rI}) and (\ref{eq:congruent-form-r-PI}), respectively.
\end{theorem}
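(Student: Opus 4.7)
One direction is essentially covered by Proposition \ref{prop:Ln-cong}, extended to finite sample spaces in the Remark preceding the theorem: every family of the form (\ref{eq:congruent-form-rI}) or (\ref{eq:congruent-form-r-PI}) is congruent, and Lemma \ref{lem:cantens-P} provides uniqueness of the representation. The content of the theorem is therefore the converse, that every congruent $n$-tensor family on finite sample spaces has the asserted algebraic form.

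The plan is to reconstruct the component functions $\psi^{\vec i}_I(\mu) := (\Theta^n_I)_\mu(\delta_{i_1}, \ldots, \delta_{i_n})$ from $\mu \in \Mm_+(I)$ using three classes of congruent Markov kernels: (a) those induced by bijections $\sigma \colon I \to I'$, which are automatically congruent via $\sigma^{-1}$; (b) the one-state splitting kernels $K \colon I \to \Pp(I \cup \{*\})$ sending $\delta_j \mapsto t\delta_j + (1-t)\delta_*$ for a chosen state $j$ and parameter $t \in (0,1)$ while leaving the remaining $\delta_i$ fixed, which are congruent via the merging statistic $\kappa(*) = j$; and (c) compositions of these. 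Since on $\Mm_+(I)$ the formal derivative $dK_r$ of Definition \ref{def:formal-deriv} agrees on the basis $\{\delta_i\}$ with the Markov transition matrix itself, the invariance $K_r^\ast \Theta^n_{I';r} = \Theta^n_{I;r}$ becomes a concrete system of equations on the $\psi^{\vec i}_I$.

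Applying (a) first, $\psi^{\vec i}_I(\mu)$ depends only on the partition ${\bf P}(\vec i)$, on the weights $(\mu_{j_1}, \ldots, \mu_{j_r})$ at the support $\{j_1, \ldots, j_r\}$ of $\vec i$ (with the symmetry of equal-size blocks of ${\bf P}(\vec i)$), and symmetrically on the remaining weights. Applying a splitting kernel from (b) at an off-support state $j \notin \{j_1, \ldots, j_r\}$ leaves $\vec i$ unchanged, since $dK_r(\delta_{i_k}) = \delta_{i_k}$ for $i_k \neq j$, so invariance gives $\psi^{\vec i}_I(\mu) = \psi^{\vec i}_{I'}(K_*\mu)$; iterating together with bijections from (a) and using continuity forces the dependence on the off-support weights to collapse to a function of their sum, hence of $\|\mu\|$ alone. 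We conclude
\[
\psi^{\vec i}_I(\mu) \;=\; g_{{\bf P}(\vec i)}\bigl(\mu_{j_1}, \ldots, \mu_{j_r};\, \|\mu\|\bigr)
\]
for continuous functions $g_{\bf P}$ symmetric within equal-size blocks.

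The main obstacle is to extract the algebraic form of $g_{\bf P}$. Splitting a support state $j_k$ with parameter $t$ and expanding multilinearly across the block $P_k$ of ${\bf P}(\vec i)$ yields the polynomial-in-$t$ functional equation
\[
g_{\bf P}(\ldots, \mu_{j_k}, \ldots) \;=\; \sum_{S \subseteq P_k} t^{|S|}(1-t)^{|P_k \setminus S|}\; g_{{\bf P}_S}\bigl(\ldots, t\mu_{j_k}, (1-t)\mu_{j_k}, \ldots\bigr),
\]
where ${\bf P}_S$ replaces $P_k$ with the pair $\{S, P_k \setminus S\}$ (so ${\bf P}_S = {\bf P}$ when $S \in \{\emptyset, P_k\}$) and the $\|\mu\|$ argument, which is preserved by $K_*$, is suppressed. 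Extracting the coefficients of $t^s(1-t)^{n_k - s}$ as independent relations and proceeding by downward induction on $|{\bf P}|$, the base case $|{\bf P}| = n$ consists entirely of singleton blocks and reduces to the Cauchy-type identity $h(x) = t\, h(tx) + (1-t)\, h((1-t)x)$, whose only continuous solutions are constants, matching $\tau^{\{\{1\},\ldots,\{n\}\}}_I = (\tau^1_I)^{\otimes n}$ via (\ref{eq:can-tens-comp}). In the inductive step the equation for $g_{\bf P}$ involves the strictly finer partitions ${\bf P}_S$ (already pinned down) together with $g_{\bf P}$ at rescaled arguments, and the power-law ansatz $\mu_{j_k}^\alpha$ applied to the homogeneous piece forces $\alpha = 1 - n_k$, which is exactly the exponent occurring in (\ref{eq:can-tens-comp}); collecting the resulting monomials across all partitions identifies $\Theta^n_I$ with an expression of the form (\ref{eq:congruent-form-rI}), and uniqueness of the coefficients $a_{\bf P}$ is supplied by Lemma \ref{lem:cantens-P}. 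The $\Pp_+(I)$ case follows by the same scheme with test vectors restricted to $\Ss_0(I) = T_\mu \Pp_+(I)$: partitions containing a singleton block drop out because $\tau^1_I$ vanishes on $\Ss_0(I)$, while the constraint $\sum_i \mu_i = 1$ collapses the $\|\mu\|$-dependent coefficients to constants $c_{\bf P}$, yielding (\ref{eq:congruent-form-r-PI}).
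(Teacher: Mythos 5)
Your overall route is viable and genuinely different from the paper's, but as written the central inductive step has a real gap. First, the claim that one can extract "the coefficients of $t^s(1-t)^{n_k-s}$ as independent relations" is not valid: the right-hand side of your splitting identity is not a polynomial in $t$ with $t$-independent coefficients, because the unknown functions $g_{{\bf P}_S}$ and $g_{\bf P}$ are themselves evaluated at the $t$-dependent arguments $t\mu_{j_k}$ and $(1-t)\mu_{j_k}$, so the monomials in $t$ mix and no such separation of relations is available. Second, in the inductive step the "power-law ansatz'' only shows that $\mu_{j_k}^{1-n_k}$ \emph{is} a solution of the homogeneous equation $g(x)=t^{n_k}g(tx)+(1-t)^{n_k}g((1-t)x)$; the theorem needs that it is the \emph{only} continuous solution up to scale, and that the particular solution assembled from the already-determined finer partitions really satisfies the inhomogeneous equation, with the extracted coefficient $a_{\bf P}$ independent of the ambient finite set $I$ and of the chosen support states, and continuous in $\|\mu\|$. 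The uniqueness part is repairable by the same trick as your base case (substitute $v(x):=x^{n_k-1}g(x)$ to reduce to the additive Cauchy equation $v(a+b)=v(a)+v(b)$), but none of this is in the proposal, and it is exactly where the work lies. Finally, the $\Pp_+(I)$ case cannot simply be rerun "with test vectors restricted to $\Ss_0(I)$": your whole bookkeeping is in terms of the component functions on the basis $\delta_i$, which do not lie in $\Ss_0(I)=T_\mu\Pp_+(I)$. The paper's device is to pull a congruent family on $\Pp_+(I)$ back along $\pi_I:\Mm_+(I)\to\Pp_+(I)$, apply the $\Mm_+$ result, and then restrict, with singleton blocks dying because $\tau^1_I$ vanishes on $\Ss_0(I)$; some such reduction (or the test vectors of Lemma \ref{lem:cantens-P}) is needed.

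For comparison, the paper avoids functional equations in the measure variables altogether: it evaluates the family only at the uniform measures $\lambda c_I$, where permutation kernels give dependence on the partition alone (Lemma \ref{lem:step1}) and the refinement kernels $i\mapsto |J|^{-1}\sum_j\delta_{(i,j)}$ force the precise scaling $\theta^{{\bf P}_0}_{I,\lambda c_I}=f_{{\bf P}_0}(\lambda)|I|^{n-|{\bf P}_0|}$ (Lemma \ref{lem:step2}); it then subtracts canonical tensors along the partition order to annihilate the family at all centers (Lemma \ref{lem:kill-center}), and concludes globally because every measure with rational weight ratios is mapped to a center by a congruent kernel and such measures are dense (Lemma \ref{lem:step3}). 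Your approach, once the homogeneous-equation uniqueness, the consistency of the particular solution, the independence of $a_{\bf P}$ from $I$, and the $\Pp_+$ reduction are supplied, would give an alternative proof closer in spirit to the classical functional-equation arguments; the paper's argument trades that analysis for the center-evaluation and density step, which is why it never has to solve any equation in the weights.
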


The rest of this section will be devoted to its proof which is split up into several lemmas.

\begin{lemma} \label{lem:can-tens-P}
Let $\tau^{\bf P}_I $ be the canonical $n$-tensor from Definition \ref{eq:cantens-P-Om}, and define the center
\begin{equation} \label{eq:c-I}
c_I := \frac1{|I|} \sum_i \delta_i \in \Pp_+(I).
\end{equation}
Then for any $\lambda > 0$ we have
\begin{equation} \label{eq:eval-tau-P}
(\tau^{\bf P}_I)_{\lambda c_I}(\delta_{\vec i}) = \left\{ \begin{array}{cl} \displaystyle{\left(\frac{|I|}\lambda\right)^{n - |{\bf P}|}} & \mbox{if ${\bf P} \leq {\bf P}(\vec i)$,}\\ \\ 0 & \mbox{otherwise}. \end{array} \right.
\end{equation}
\end{lemma}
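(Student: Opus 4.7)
The plan is to compute $(\tau^{\bf P}_I)_{\lambda c_I}(\delta_{\vec i})$ directly by unwinding the definition of $\tau^{\bf P}_I$ as a product of canonical tensors and then invoking the explicit component formula established in (\ref{eq:can-tens-comp}) for $r = 1$.

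First I would apply the defining equation (\ref{eq:cantens-P-Om}) with ${\bf P} = \{P_1, \ldots, P_r\}$ and $n_k := |P_k|$, so that
\[
(\tau^{\bf P}_I)_{\mu}(\delta_{\vec i}) = \prod_{k=1}^{r} (\tau^{n_k}_I)_{\mu}\bigl(\delta_{i_{\pi_{\bf P}(k,1)}}, \ldots, \delta_{i_{\pi_{\bf P}(k,n_k)}}\bigr).
\]
Each factor is a component of a canonical tensor evaluated on basis measures, so (\ref{eq:can-tens-comp}) (with $r = 1$) tells me that this factor equals $m_j^{1 - n_k}$ if all indices $i_{\pi_{\bf P}(k,\cdot)}$ coincide with a common value $j$, and vanishes otherwise. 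The simultaneous nonvanishing of all $r$ factors is precisely the statement that the indices $i_s$ are constant on every block $P_k$, which is exactly the condition ${\bf P} \leq {\bf P}(\vec i)$. This already gives the second case in (\ref{eq:eval-tau-P}).

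Next, at the center $\mu = \lambda c_I = \frac{\lambda}{|I|} \sum_i \delta_i$, all coordinates $m_j$ equal $\lambda/|I|$. Under the assumption ${\bf P} \leq {\bf P}(\vec i)$, each factor therefore contributes
\[
\left(\frac{\lambda}{|I|}\right)^{1 - n_k} = \left(\frac{|I|}{\lambda}\right)^{n_k - 1}.
\]
Multiplying across $k = 1, \ldots, r$ and using $\sum_{k=1}^r (n_k - 1) = n - r = n - |{\bf P}|$ gives $(|I|/\lambda)^{n - |{\bf P}|}$, which is the first case in (\ref{eq:eval-tau-P}).

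There is really no obstacle here: the proof is a direct unwinding of the definition of $\tau^{\bf P}_I$, followed by specializing (\ref{eq:can-tens-comp}) at the point $\lambda c_I$ and using that $\sum_k n_k = n$ and the number of blocks is $|{\bf P}|$. The only minor subtlety worth noting in writing it up is that the product depending on a choice of bijection $\pi_{\bf P}$ is well-defined because $\tau^{n_k}_I$ is symmetric, so the combinatorial identification of the nonvanishing condition with ${\bf P} \leq {\bf P}(\vec i)$ is unambiguous.
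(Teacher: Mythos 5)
Your proposal is correct and follows essentially the same route as the paper: unwind the definition (\ref{eq:cantens-P-Om}) into a product over blocks, apply the component formula (\ref{eq:can-tens-comp}) at $\lambda c_I$ where all coordinates equal $\lambda/|I|$, and sum the exponents $n_k-1$ to get $n-|{\bf P}|$. The only cosmetic difference is that you re-derive the vanishing criterion directly from (\ref{eq:can-tens-comp}), while the paper simply cites the previously established equivalence (\ref{eq:tau-P-order}), which encodes the same observation.
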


\begin{proof}
For $\mu = \lambda c_I$, $\lambda > 0$, the components $\mu_i$ of $\mu$ all equal $\mu_i = \lambda/|I|$, whence in this case we have for all multiindices $\vec i$ with ${\bf P} \leq {\bf P}(\vec i)$

\[
(\tau^{\bf P}_I)_{\lambda c_I}(\delta_{\vec i}) = \prod_{i=1}^r \theta_{I;\lambda c_I}^{i, \ldots, i} \stackrel{(\ref{eq:can-tens-comp})} = \prod_{i=1}^r \left( \frac{|I|}\lambda \right)^{k_i-1} = \left( \frac{|I|}\lambda \right)^{k_1+\ldots+k_r-r} = \left( \frac{|I|}\lambda \right)^{n-|{\bf P}|}\\
\]
showing (\ref{eq:eval-tau-P}). If ${\bf P} \not \leq {\bf P}(\vec i)$, the claim follows from (\ref{eq:tau-P-order}).
\end{proof}

Now let us suppose that $\{ \tilde{\Theta}^n_I\: : \; I\; \mbox{finite}\}$ is a congruent family of $n$-tensors on $\Mm_+(I)$, and define $\theta^{\vec i}_{I, \mu}$ as in (\ref{eq:tens-comp}) and $c_I \in \Pp_+(I)$ as in (\ref{eq:c-I}).

\begin{lemma} \label{lem:step1}
Let $\{ \tilde{\Theta}^n_I\: : \; I\; \mbox{finite}\}$ and $\theta^{\vec i}_{I, \mu}$ be as before, and let $\lambda > 0$. If $\vec i, \vec j \in I^n$ are multiindices with ${\bf P}(\vec i) = {\bf P}(\vec j)$, then
\[
\theta^{\vec i}_{I, \lambda c_I} = \theta^{\vec j}_{I, \lambda c_I}.
\]
\end{lemma}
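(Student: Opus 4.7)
My plan is to exploit the permutation symmetry of $\lambda c_I$ by invoking congruence of the family under the Markov kernel induced by a suitable bijection of $I$.

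Since ${\bf P}(\vec i) = {\bf P}(\vec j)$, the number of distinct values appearing in $\vec i$ and in $\vec j$ coincide (both equal $|{\bf P}(\vec i)|$), and these distinct values occur with the same multiplicity pattern. Hence I can find a bijection from the set of distinct values of $\vec i$ onto the set of distinct values of $\vec j$ sending $i_k \mapsto j_k$, and since $|I|$ is the same on both sides, I can extend this to a permutation $\sigma: I \to I$ with $\sigma(i_k) = j_k$ for all $k=1,\ldots,n$. Such a $\sigma$ is a statistic, and the induced Markov kernel $K^\sigma: I \to \Pp(I)$, $K^\sigma(i) = \delta_{\sigma(i)}$, is congruent with respect to the statistic $\sigma^{-1}: I \to I$, since $(\sigma^{-1})_\ast K^\sigma_\ast = (\sigma^{-1})_\ast \sigma_\ast = \mathrm{Id}_{\Ss(I)}$. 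Moreover, $K^\sigma_\ast(\lambda c_I) = \lambda c_I$, because $K^\sigma_\ast$ merely permutes the basis $\{\delta_i\}_{i\in I}$ and $\lambda c_I$ is the uniform measure of mass $\lambda$.

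Next I need to identify the formal derivative $d_{\lambda c_I} K^\sigma_1$ from (\ref{eq:K-mu-r}) on the basis vectors $\delta_i$. Writing $\delta_i = \phi_i (\lambda c_I)$ with $\phi_i = \tfrac{|I|}{\lambda}\mathbb{1}_{\{i\}}$ and using $\mu' := K^\sigma_\ast(\lambda c_I) = \lambda c_I$, formula (\ref{eq:K-mu-r}) gives
\[
d_{\lambda c_I} K^\sigma_1(\delta_i) \;=\; \frac{d\{K^\sigma_\ast(\phi_i\,\lambda c_I)\}}{d(\lambda c_I)}\,(\lambda c_I) \;=\; \frac{d\delta_{\sigma(i)}}{d(\lambda c_I)}\,(\lambda c_I) \;=\; \delta_{\sigma(i)}.
\]
By Proposition \ref{prop:Ln-cong}-style congruence assumed for the family $(\tilde\Theta^n_I)$, we have $(K^\sigma_1)^\ast \tilde\Theta^n_I = \tilde\Theta^n_I$. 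Evaluating both sides at $\lambda c_I$ on the tuple $(\delta_{i_1},\ldots,\delta_{i_n})$ yields
\[
\theta^{\vec i}_{I,\lambda c_I} = (\tilde\Theta^n_I)_{\lambda c_I}(\delta_{i_1},\ldots,\delta_{i_n}) = (\tilde\Theta^n_I)_{\lambda c_I}(\delta_{\sigma(i_1)},\ldots,\delta_{\sigma(i_n)}) = (\tilde\Theta^n_I)_{\lambda c_I}(\delta_{j_1},\ldots,\delta_{j_n}) = \theta^{\vec j}_{I,\lambda c_I},
\]
which is the claim.

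There is no real obstacle: the content is that congruence under the permutation statistics of a finite set $I$ forces the components $\theta^{\vec i}_{I,\lambda c_I}$ to depend only on the partition type of $\vec i$. The only technical point worth checking carefully is that the formal derivative of $K^\sigma_1$ at a fully supported measure $\lambda c_I$ really sends $\delta_i \mapsto \delta_{\sigma(i)}$, which is immediate from (\ref{eq:K-mu-r}) since on a finite sample space every Dirac measure is dominated by any strictly positive measure.
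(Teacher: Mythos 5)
Your proof is correct and follows essentially the same route as the paper: choose a permutation $\sigma$ of $I$ with $\sigma(i_k)=j_k$, note that the induced Markov kernel $K^\sigma(i)=\delta_{\sigma(i)}$ is congruent and fixes $\lambda c_I$, and invoke congruence of the family to transport $(\delta_{i_1},\ldots,\delta_{i_n})$ to $(\delta_{j_1},\ldots,\delta_{j_n})$. The additional verifications you spell out (existence of the extension of $i_k\mapsto j_k$ to a permutation, and that the formal derivative at $\lambda c_I$ sends $\delta_i\mapsto\delta_{\sigma(i)}$) are details the paper leaves implicit.
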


\begin{proof}
If ${\bf P}(\vec i) = {\bf P}(\vec j)$, then there is a permutation $\sigma: I \to I$ such that $\sigma(i_k) = j_k$ for $k = 1, \ldots, n$. We define the congruent Markov kernel $K: I \to \Pp(I)$ by $K^i := \delta_{\sigma(i)}$. Then evidently, $K_*c_I = c_I$, and Definition \ref{def:algebr-gen} implies
\begin{eqnarray*}
\theta^{\vec i}_{I, \lambda c_I} & = & (\tilde{\Theta}^n_I)_{\lambda c_I} (\delta_{i_1}, \ldots, \delta_{i_n})\\
& = &
(\tilde{\Theta}^n_I)_{K_*(\lambda c_I)} (K_*\delta_{i_1}, \ldots, K_*\delta_{i_n})\\
& = & (\tilde{\Theta}^n_I)_{\lambda c_I} (\delta_{j_1}, \ldots, \delta_{j_n}) =  \theta^{\vec j}_{I, \lambda c_I},
\end{eqnarray*}
which shows the claim.
\end{proof}

By virtue of this lemma, we may define
\[
\theta^{\bf P}_{I, \lambda c_I} := \theta^{\vec i}_{I, \lambda c_I}, \qquad \mbox{where $\vec i \in I^n$ is a multiindex with ${\bf P}(\vec i) = {\bf P}$}.
\]

\begin{lemma} \label{lem:step2}
Let $\{ \tilde{\Theta}^n_I\: : \; I\; \mbox{finite}\}$ and $\theta^{\bf P}_{I, \lambda c_I}$ be as before, and suppose that ${\bf P}_0 \in \mbox{\bf Part}(n)$ is a partition such that
\begin{equation} \label{eq:tensor-clustered}
\theta^{\bf P}_{I, \lambda c_I} = 0 \qquad \mbox{for all ${\bf P} < {\bf P}_0$, $\lambda > 0$ and $I$}.
\end{equation}
Then there is a continuous function $f_{{\bf P}_0}: (0, \infty) \to \R$ such that
\begin{equation} \label{eq:step2}
\theta^{{\bf P}_0}_{I, \lambda c_I} =  f_{{\bf P}_0}(\lambda)\;|I|^{n-|{\bf P}_0|}.
\end{equation}
\end{lemma}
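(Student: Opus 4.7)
The plan is to exploit the vanishing hypothesis \eqref{eq:tensor-clustered} via a carefully chosen congruent amplification of $I$. Concretely, for a positive integer $m$ set $J := I \times \{1, \ldots, m\}$, take $\kappa: J \to I$ to be projection on the first factor, and consider the Markov kernel $K: I \to \Pp(J)$ defined by
\[
K(i) := \frac{1}{m} \sum_{k=1}^{m} \delta_{(i,k)},
\]
which is $\kappa$-congruent (cf.\ Example \ref{ex:congstat}) and satisfies $K_\ast c_I = c_J$, hence $K_\ast(\lambda c_I) = \lambda c_J$ for every $\lambda > 0$.

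The key step is to pick $\vec i \in I^n$ with ${\bf P}(\vec i) = {\bf P}_0 = \{P_1, \ldots, P_r\}$, apply congruence of the family, and expand multilinearly:
\[
\theta^{{\bf P}_0}_{I, \lambda c_I} = (\tilde\Theta^n_J)_{\lambda c_J}\bigl(K_\ast \delta_{i_1}, \ldots, K_\ast \delta_{i_n}\bigr) = \frac{1}{m^n} \sum_{\vec k \in \{1, \ldots, m\}^n} \theta^{{\bf P}((i_1,k_1), \ldots, (i_n, k_n))}_{J, \lambda c_J},
\]
where the rewriting as $\theta^{\bf P}$ uses Lemma \ref{lem:step1}. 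Now comes the combinatorial collapse: since ${\bf P}((i_1, k_1), \ldots, (i_n, k_n))$ is always a refinement of ${\bf P}_0$, hypothesis \eqref{eq:tensor-clustered} kills every term whose partition is a strict refinement. The surviving $\vec k$ are precisely those for which $k_a$ is constant on each block $P_l$ of ${\bf P}_0$, giving exactly $m^r$ tuples, each contributing $\theta^{{\bf P}_0}_{J, \lambda c_J}$. This yields $\theta^{{\bf P}_0}_{I, \lambda c_I} = m^{r-n}\, \theta^{{\bf P}_0}_{J, \lambda c_J}$, i.e.\ the ratio $\theta^{{\bf P}_0}_{I, \lambda c_I}/|I|^{n-r}$ is unchanged when $I$ is replaced by its $m$-fold amplification.

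Finally, I would deduce that this ratio is independent of $I$ altogether by comparing any two finite sets $I$ and $I'$ through their common amplification $I \times I'$ (viewed as an amplification of $I$ by factor $|I'|$ on the one hand, and of $I'$ by factor $|I|$ on the other). The common value defines $f_{{\bf P}_0}(\lambda) := \theta^{{\bf P}_0}_{I, \lambda c_I}/|I|^{n-|{\bf P}_0|}$, which gives \eqref{eq:step2}; continuity in $\lambda$ is inherited from continuity of the tensor field $\tilde\Theta^n_I$ along the curve $\lambda \mapsto \lambda c_I$. The main conceptual obstacle is identifying the correct congruent amplification --- one that maps the uniform measure $c_I$ precisely to $c_J$ while arranging the multiindex combinatorics so that the partition-refinement structure aligns with the vanishing hypothesis \eqref{eq:tensor-clustered}. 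Once the kernel $K$ above is chosen, the remaining manipulations are essentially forced.
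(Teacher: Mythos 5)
Your proof is correct and is essentially the paper's own argument: the paper uses the same congruent amplification kernel $K(i) = \frac{1}{|J|}\sum_{j \in J}\delta_{(i,j)}$ on $I' = I \times J$, the same multilinear expansion with the count of $|J|^{|{\bf P}_0|}$ surviving multiindices (using Lemma \ref{lem:step1} and the vanishing hypothesis for strict refinements), and the same comparison of two finite sets through their product to conclude that $\theta^{{\bf P}_0}_{I,\lambda c_I}/|I|^{n-|{\bf P}_0|}$ is independent of $I$.
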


\begin{proof} Let $I, J$ be finite sets, and let $I' := I \times J$. We define the Markov kernel
\[
K: I \longrightarrow \Pp(I'), \qquad i \longmapsto \frac1{|J|} \sum_{j \in J} \delta_{(i,j)}
\]
which is congruent w.r.t. the canonical projecton $\kappa: I' \to I$. Then $K_*c_I = c_{I'}$ is easily verified. Moreover, if $\vec i = (i_1, \ldots, i_n) \in I^n$ is a multiindex with ${\bf P}(\vec i) = {\bf P}_0$, then
\begin{eqnarray*}
\theta^{{\bf P}_0}_{I, \lambda c_I} & = & (\tilde \Theta^n_I)_{\lambda c_I}(\delta_{i_1}, \ldots, \delta_{i_n})\\
& \stackrel{\text{Def. } \ref{def:algebr-gen}}= & (\tilde{\Theta}^n_{I'})_{K_*(\lambda c_I)}(K_*\delta_{i_1}, \ldots, K_*\delta_{i_n})\\
& = & (\tilde{\Theta}^n_{I'})_{\lambda c_{I'}}\left(\frac1{|J|} \sum_{j_1 \in J} \delta_{(i_1, j_1)}, \ldots, \frac1{|J|} \sum_{j_n \in J} \delta_{(i_n, j_n)}\right)\\
& = & \frac1{|J|^n} \sum_{(j_1, \ldots, j_n) \in J^n} \theta^{{\bf P}((i_1, j_1), \ldots, (i_n, j_n))}_{I', \lambda c_{I'}}.
\end{eqnarray*}
Observe that ${\bf P}((i_1, j_1), \ldots, (i_n, j_n)) \leq {\bf P}(\vec i) = {\bf P}_0$. If ${\bf P}((i_1, j_1), \ldots, (i_n, j_n)) < {\bf P}_0$, then $\theta^{{\bf P}((i_1, j_1), \ldots, (i_n, j_n))}_{I', \lambda c_{I'}} = 0$ by (\ref{eq:tensor-clustered}).

Moreover, there are $|J|^{|{\bf P}_0|}$ multiindices $(j_1, \ldots, j_n) \in J^n$ for which
\linebreak
${\bf P}((i_1, j_1), \ldots, (i_n, j_n)) = {\bf P}_0$, and since for all of these $\theta^{{\bf P}((i_1, j_1), \ldots, (i_n, j_n))}_{I', \lambda c_{I'}} = \theta^{{\bf P}_0}_{I', \lambda c_{I'}}$, we obtain
\[
\theta^{{\bf P}_0}_{I, \lambda c_I} = \frac1{|J|^n} \sum_{(j_1, \ldots, j_n) \in J^n} \theta^{{\bf P}((i_1, j_1), \ldots, (i_n, j_n))}_{I', \lambda c_{I'}} = \frac{|J|^{|{\bf P}_0|}}{|J|^n}\; \theta^{{\bf P}_0}_{I', \lambda c_{I'}} = \frac1{|J|^{n -|{\bf P}_0|}}\; \theta^{{\bf P}_0}_{I', \lambda c_{I'}},
\]
and since $|I'| = |I|\; |J|$, it follows that
\[
\frac1{|I|^{n - |{{\bf P}_0}|}} \theta^{{\bf P}_0}_{I, \lambda c_I} = \frac1{|I|^{n - |{{\bf P}_0}|}} \left( \frac1{|J|^{n-|{\bf P}_0|}}\; \theta^{{\bf P}_0}_{I', \lambda c_{I'}}\right) = \frac1{|I'|^{n - |{{\bf P}_0}|}} \theta^{{\bf P}_0}_{I', \lambda c_{I'}}.
\]
Interchanging the roles of $I$ and $J$ in the previous arguments, we also get
\[
\frac1{|J|^{n - |{\bf P}_0|}} \theta^{{\bf P}_0}_{J, \lambda c_J} = \frac1{|I'|^{n - |{{\bf P}_0}|}} \theta^{{\bf P}_0}_{I', \lambda c_{I'}} = \frac1{|I|^{n - |{\bf P}_0|}} \theta^{{\bf P}_0}_{I, \lambda c_I},
\]
whence $f_{{\bf P}_0}(\lambda) := \frac1{|I|^{n - |{\bf P}_0|}} \theta^{{\bf P}_0}_{I, \lambda c_I}$ is indeed independent of the choice of the finite set $I$.
\end{proof}

\begin{lemma} \label{lem:kill-center}
Let $\{ \tilde{\Theta}^n_I\: : \; I\; \mbox{finite}\}$ and $\lambda > 0$ be as before. Then there is a congruent family $\{ \tilde \Psi^n_I\: : \; I\; \mbox{finite}\}$ of the form (\ref{eq:congruent-form-rI}) such that
\[
(\tilde{\Theta}^n_I\ - \tilde \Psi^n_I)_{\lambda c_I} = 0 \qquad \mbox{for all finite sets $I$ and all $\lambda > 0$.}
\]
\end{lemma}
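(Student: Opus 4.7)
The plan is to construct the coefficient functions $a_{\bf P}$ in the algebraically generated form (\ref{eq:congruent-form-rI}) one partition at a time, processing the lattice $\mbox{\bf Part}(n)$ in a linear extension of $\leq$. Fix such an enumeration ${\bf P}^{(1)}, \ldots, {\bf P}^{(N)}$, so that ${\bf P}^{(j)} < {\bf P}^{(i)}$ implies $j < i$; in particular ${\bf P}^{(1)}$ is the singleton partition, for which the hypothesis (\ref{eq:tensor-clustered}) is vacuously satisfied. Starting from $\tilde\Psi^{(0)}_I := 0$, I will inductively define continuous functions $a_{{\bf P}^{(i)}}:(0,\infty)\to\R$ and set
\[
\tilde\Psi^{(i)}_I := \tilde\Psi^{(i-1)}_I + a_{{\bf P}^{(i)}}(\|\mu\|)\,\tau^{{\bf P}^{(i)}}_I,
\]
maintaining the invariant that $\Xi^{(i)}_I := \tilde\Theta^n_I - \tilde\Psi^{(i)}_I$ (which is itself a congruent family, as differences of congruent families are congruent and multiplication by continuous functions of $\|\mu\|$ preserves congruence, cf.\ the discussion preceding Proposition \ref{prop:alg-gen}) satisfies $\xi^{(i),{\bf P}^{(j)}}_{I,\lambda c_I} = 0$ for all $j \leq i$, all $\lambda>0$ and all finite $I$.

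For the inductive step, assume the invariant holds for $\Xi^{(i-1)}_I$. Because every partition ${\bf P} < {\bf P}^{(i)}$ appears as some ${\bf P}^{(j)}$ with $j<i$, the hypothesis (\ref{eq:tensor-clustered}) of Lemma \ref{lem:step2} is met for $\Xi^{(i-1)}_I$ at the partition ${\bf P}^{(i)}$. Hence there is a continuous function $f_{{\bf P}^{(i)}}:(0,\infty)\to\R$, crucially \emph{independent of $I$}, with
\[
\xi^{(i-1),{\bf P}^{(i)}}_{I,\lambda c_I} \;=\; f_{{\bf P}^{(i)}}(\lambda)\,|I|^{n-|{\bf P}^{(i)}|}.
\]
Define $a_{{\bf P}^{(i)}}(\lambda) := f_{{\bf P}^{(i)}}(\lambda)\,\lambda^{n-|{\bf P}^{(i)}|}$, which is continuous on $(0,\infty)$.

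To check that the invariant is preserved, I use Lemma \ref{lem:can-tens-P} and the triangular structure (\ref{eq:tau-P-order}). For a multiindex $\vec i$ with ${\bf P}(\vec i) = {\bf P}^{(i)}$, formula (\ref{eq:eval-tau-P}) gives $(\tau^{{\bf P}^{(i)}}_I)_{\lambda c_I}(\delta_{\vec i}) = (|I|/\lambda)^{n-|{\bf P}^{(i)}|}$, so
\[
\xi^{(i),{\bf P}^{(i)}}_{I,\lambda c_I} = f_{{\bf P}^{(i)}}(\lambda)\,|I|^{n-|{\bf P}^{(i)}|} - f_{{\bf P}^{(i)}}(\lambda)\,\lambda^{n-|{\bf P}^{(i)}|}\,(|I|/\lambda)^{n-|{\bf P}^{(i)}|} = 0.
\]
For $j < i$ and $\vec i$ with ${\bf P}(\vec i) = {\bf P}^{(j)} < {\bf P}^{(i)}$, the refinement relation ${\bf P}^{(i)} \leq {\bf P}(\vec i)$ fails, whence (\ref{eq:tau-P-order}) gives $(\tau^{{\bf P}^{(i)}}_I)_{\lambda c_I}(\delta_{\vec i}) = 0$ and therefore $\xi^{(i),{\bf P}^{(j)}}_{I,\lambda c_I} = \xi^{(i-1),{\bf P}^{(j)}}_{I,\lambda c_I} = 0$ by the prior inductive hypothesis. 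After the final step $i = N$, the invariant covers every partition in $\mbox{\bf Part}(n)$; combining this with Lemma \ref{lem:step1} to pass from partition representatives to arbitrary multiindices yields $(\tilde\Theta^n_I - \tilde\Psi^n_I)_{\lambda c_I}(\delta_{\vec i}) = 0$ for every $\vec i$, so the two tensors agree at $\lambda c_I$ on a basis and therefore as multilinear forms.

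The only nontrivial obstacle is producing coefficient functions that work uniformly across \emph{all} finite sets $I$ simultaneously; this is precisely what Lemma \ref{lem:step2} delivers through the $I$-independence of $f_{{\bf P}^{(i)}}$. Once this uniformity is in hand, the rest of the argument is bookkeeping in the finite partition lattice, driven by the compatibility between $|{\bf P}|$, the exponent $n - |{\bf P}|$ in (\ref{eq:eval-tau-P}), and the normalization $|I|^{n-|{\bf P}|}$ appearing in Lemma \ref{lem:step2}.
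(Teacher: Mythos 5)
Your proof is correct and follows essentially the same strategy as the paper: an induction over the partition lattice that repeatedly applies Lemma \ref{lem:step2} to extract an $I$-independent coefficient $f_{\bf P}(\lambda)$ and subtracts $\|\mu\|^{n-|{\bf P}|}f_{\bf P}(\|\mu\|)\,\tau^{\bf P}_I$, using (\ref{eq:tau-P-order}) and Lemma \ref{lem:can-tens-P} to see that each subtraction kills the new component without disturbing the already-killed ones. The only cosmetic difference is that you fix a linear extension of $\leq$ in advance, whereas the paper iterates by choosing a minimal partition outside the set $N(\{\tilde\Theta^n_I\})$ of already-vanishing components; the two bookkeeping schemes are equivalent.
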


\begin{proof} For a congruent family of $n$-tensors $\{ \tilde{\Theta}^n_I\: : \; I\; \mbox{finite}\}$, we define
\[
N(\{\tilde{\Theta}^n_I\}) := \{ {\bf P} \in \mbox{\bf Part}(n) \; : \; (\tilde{\Theta}^n_I)_{\lambda c_I} (\delta_{\vec i}) = 0 \mbox{ whenever }{\bf P}(\vec i) \leq {\bf P}\}.
\]
If $N(\{\tilde{\Theta}^n_I\}) \subsetneq \mbox{\bf Part}(n)$, then let
\[
{\bf P}_0 = \{ P_1, \ldots, P_r\} \in \mbox{\bf Part}(n) \backslash N(\{\tilde{\Theta}^n_I\})
\]
be a minimal element, i.e., such that ${\bf P} \in N(\{\tilde{\Theta}^n_I\})$ for all ${\bf P} < {\bf P}_0$. In particular, for this partition (\ref{eq:tensor-clustered}) and hence (\ref{eq:step2}) holds. Let
\begin{equation} \label{eq:Theta'}
({\tilde{\Theta'}}^n_I)_\mu := (\tilde{\Theta}^n_I)_\mu - \|\mu\|^{n - |{\bf P}_0|} f_{{\bf P}_0}(\|\mu\|)\; (\tau^{{\bf P}_0}_I)_\mu
\end{equation}
with the function $f_{{\bf P}_0}$ from (\ref{eq:step2}). Then $\{{\tilde{\Theta'}}^n_I \; : \; I\; \mbox{finite}\}$ is again a family of $n$-tensors.

Let ${\bf P} \in N(\{\tilde{\Theta}^n_I\})$ and $\vec i$  be a multiindex with ${\bf P}(\vec i) \leq {\bf P}$. If $(\tau^{{\bf P}_0}_I)_{\lambda c_I}(\delta_{\vec i}) \neq 0$, then by Lemma \ref{lem:can-tens-P} we would have ${\bf P}_0 \leq {\bf P}(\vec i) \leq {\bf P} \in N(\{\tilde{\Theta}^n_I\})$ which would imply that ${\bf P}_0 \in N(\{\tilde{\Theta}^n_I\})$, contradicting the choice of ${\bf P}_0$.

Thus, $(\tau^{{\bf P}_0}_I)_{\lambda c_I}(\delta_{\vec i}) = 0$ and hence $({\tilde{\Theta'}}^n_I)_{\lambda c_I}(\delta_{\vec i}) = 0$ whenever ${\bf P}(\vec i) \leq {\bf P}$, showing that ${\bf P} \in N(\{{\tilde{\Theta'}}^n_I\})$.

Thus, what we have shown is that $N(\{\tilde{\Theta}^n_I\}) \subset N(\{{\tilde{\Theta'}}^n_I\})$.
On the other hand, if ${\bf P}(\vec i) = {\bf P}_0$, then again by Lemma \ref{lem:can-tens-P}
\[
(\tau^{{\bf P}_0}_I)_{\lambda c_I} (\delta_{\vec i}) = \left(\frac{|I|}\lambda\right)^{n - |{\bf P}_0|},
\]
and since $\|\lambda c_I\| = \lambda$, it follows that
\begin{eqnarray*}
({\tilde{\Theta'}}^n_I)_{\lambda c_I} (\delta_{\vec i}) & \stackrel{(\ref{eq:Theta'})}= & ({\tilde{\Theta}}^n_I)_{\lambda c_I} (\delta_{\vec i}) - \lambda^{n - |{\bf P}_0|} f_{{\bf P}_0}(\lambda) (\tau^{{\bf P}_0}_I)_{\lambda c_I} (\delta_{\vec i})\\
& = & \theta^{{\bf P}_0}_{I, \lambda c_I} - \lambda^{n - |{\bf P}_0|} f_{{\bf P}_0}(\lambda) \left(\frac{|I|}\lambda\right)^{n - |{\bf P}_0|}\\
& = & \theta^{{\bf P}_0}_{I, \lambda c_I} - f_{{\bf P}_0}(\lambda)\; |I|^{n - |{\bf P}_0|} \stackrel{(\ref{eq:step2})}= 0.
\end{eqnarray*}
That is, $({\tilde{\Theta'}}^n_I)_{\lambda c_I} (\delta_{\vec i}) = 0$ whenever ${\bf P}(\vec i) = {\bf P}_0$. If $\vec i$ is a multiindex with ${\bf P}(\vec i) < {\bf P}_0$, then ${\bf P}(\vec i) \in N(\{{\tilde{\Theta'}}^n_I\})$ by the minimality of ${\bf P}_0$, so that $\tilde{\Theta}^n_I(\delta_{\vec i}) = 0$. Moreover, $(\tau^{{\bf P}_0}_I)_{\lambda c_I}(\delta_{\vec i}) = 0$ by Lemma \ref{lem:can-tens-P}, whence
\[
({\tilde{\Theta'}}^n_I)_{\lambda c_I} (\delta_{\vec i}) = 0 \qquad \mbox{whenever} \qquad {\bf P}(\vec i) \leq {\bf P}_0,
\]
showing that ${\bf P}_0 \in N(\{{\tilde{\Theta'}}^n_I\})$. Therefore,
\[
N(\{\tilde{\Theta}^n_I\}) \subsetneq N(\{{\tilde{\Theta'}}^n_I\}).
\]

What we have shown is that given a congruent family of $n$-tensors $\{\tilde{\Theta}^n_I\}$ with $N(\{\tilde{\Theta}^n_I\}) \subsetneq \mbox{\bf Part}(n)$, we can enlarge $N(\{\tilde{\Theta}^n_I\})$ by subtracting a multiple of the canonical tensor of some partition. Repeating this finitely many times, we conclude that for some congruent family $\{ \tilde \Psi^n_I\}$ of the form (\ref{eq:congruent-form-rI})
\[
N(\{\tilde{\Theta}^n_I - \tilde \Psi^n_I\}) = \mbox{\bf Part}(n),
\]
and this implies by definition that $(\tilde{\Theta}^n_I - \tilde \Psi^n_I)_{\lambda c_I} = 0$ for all $I$ and all $\lambda > 0$.
\end{proof}

\begin{lemma} \label{lem:step3}
Let $\{ \tilde{\Theta}^n_I\; : \; I\; \mbox{finite}\}$ be a congruent family of $n$-tensors such that $(\tilde{\Theta}^n_I)_{\lambda c_I} = 0$ for all $I$ and $\lambda > 0$. Then $\tilde{\Theta}^n_I = 0$ for all $I$.
\end{lemma}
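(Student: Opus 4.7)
The plan is to reduce an arbitrary point in $\Mm_+(I)$ to a center $\lambda c_J$ of a \emph{larger} finite sample space $J$ by means of a congruent Markov kernel, and then invoke the vanishing hypothesis at centers together with the congruence of the family. The argument splits naturally into a rational case plus a density/continuity step.

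First I would treat the case where $\mu = \sum_i \mu_i \delta_i \in \Mm_+(I)$ has strictly positive rational components. Writing $\mu_i = p_i/q$ with a common denominator $q$, setting $N := \sum_i p_i$ and $\lambda := \|\mu\| = N/q$, and choosing a set $J$ of cardinality $N$ equipped with a partition $J = \dot\bigcup_{i \in I} J_i$ with $|J_i| = p_i$, I would define the Markov kernel
\[
K \colon I \longrightarrow \Pp(J), \qquad K(i) := \frac{1}{p_i} \sum_{j \in J_i} \delta_j.
\]
By Example \ref{ex:congstat}, this kernel is congruent with respect to the statistic $\kappa \colon J \to I$ that collapses each $J_i$ to $i$. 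The sizes $|J_i| = p_i$ are chosen precisely so that the pushforward identity $K_\ast \mu = \lambda c_J$ holds, which is the one place where rationality is used and the main (though small) obstacle in the construction.

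Next I would invoke the congruence relation from Definition \ref{def:algebr-gen}, namely $K_1^\ast \tilde{\Theta}^n_J = \tilde{\Theta}^n_I$. Since $r = 1$, the formal derivative $dK_1$ coincides with the linear Markov morphism $K_\ast$ itself, so evaluating the pullback identity at $\mu$ gives
\[
(\tilde{\Theta}^n_I)_\mu(V_1, \ldots, V_n) = (\tilde{\Theta}^n_J)_{\lambda c_J}\bigl(K_\ast V_1, \ldots, K_\ast V_n\bigr),
\]
and the right-hand side vanishes for all $V_i \in T_\mu \Mm_+(I) = \Ss(I)$ by the hypothesis that $\tilde{\Theta}^n_J$ is zero at every positive multiple of every center. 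Hence $\tilde{\Theta}^n_I$ vanishes on the set of measures with strictly positive rational components.

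Finally, since such measures are dense in $\Mm_+(I)$ and the component functions $\mu \mapsto (\tilde{\Theta}^n_I)_\mu(\delta_{i_1}, \ldots, \delta_{i_n})$ are continuous in $\mu$ by Definition \ref{def:tensor-MMr}, I would conclude $\tilde{\Theta}^n_I \equiv 0$ on $\Mm_+(I)$. The only genuinely creative step is the construction of $K$ and the verification $K_\ast \mu = \lambda c_J$; everything else is bookkeeping plus an elementary density argument.
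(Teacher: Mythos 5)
Your proposal is correct and follows essentially the same route as the paper: split each atom $i$ into equally weighted copies via a congruent Markov kernel $K$ so that $K_\ast\mu$ becomes a multiple of the center of the larger finite set, pull back the vanishing at centers through the congruence relation, and finish with density of (rational) measures plus continuity of the tensor. The only cosmetic difference is that the paper parametrizes the dense set by requiring $\mu/\|\mu\|$ to have rational coefficients rather than $\mu$ itself, which changes nothing in the argument.
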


\begin{proof}
Consider $\mu \in \Mm_+(I)$ such that $\pi_I(\mu) = \mu/\|\mu\| \in \Pp_+(I)$ has rational coefficients, i.e.\ \[
\mu = \|\mu\| \sum_i \frac{k_i}n \delta_i
\]
for some $k_i, n \in \N$ and $\sum_{i \in I} k_i = n$. Let
\[
    I' \; := \; \biguplus_{i \in I} \left( \{ i \} \times \{1,\dots,k_i\} \right),
\]
so that $|I'|Ê= n$, and consider the congruent Markov kernel
\[
  K: \;\; i \; \longmapsto \; \frac{1}{k_i} \sum_{j = 1}^{k_i} \delta_{(i,j)}.
\]
Then
\[
K_*\mu = \|\mu\| \sum_i \frac{k_i}n \left(\frac{1}{k_i} \sum_{j = 1}^{k_i} \delta_{(i,j)}\right) = \|\mu\| \frac1n \sum_i \sum_{j = 1}^{k_i} \delta_{(i,j)} = \|\mu\| c_{I'}.
\]
Thus, Definition \ref{def:algebr-gen} implies
\[
(\tilde{\Theta}^n_I)_\mu (V_1, \ldots, V_n) = \underbrace{(\tilde{\Theta}^n_{I'})_{\|\mu\| c_{I'}}}_{=0} (K_*V_1, \ldots, K_*V_n) = 0,
\]
so that $(\tilde{\Theta}^n_I)_\mu = 0$ whenever $\pi_I(\mu)$ has rational coefficients. But these $\mu$ form a dense subset of $\Mm_+(I)$, whence $(\tilde{\Theta}^n_I)_\mu = 0$ for {\em all }$\mu \in \Mm_+(I)$, which completes the proof.
\end{proof}

We are now ready to prove the main result in this section.

\begin{proof}[Proof of Theorem \ref{thm:Chentsov-finite}] Let $\{ \tilde{\Theta}^n_I : I \mbox{ finite}\}$ be a congruent family of $n$-tensors. By Lemma \ref{lem:kill-center} there is a congruent family $\{ \tilde \Psi^n_I : I \mbox{ finite}\}$ of the form (\ref{eq:congruent-form-rI}) such that $(\tilde{\Theta}^n_I - \tilde \Psi^n_I)_{\lambda c_I} = 0$ for all finite $I$ and all $\lambda > 0$.

Since $\{ \tilde{\Theta}^n_I - \tilde \Psi^n_I : I \mbox{ finite}\}$ is again a congruent family, Lemma \ref{lem:step3} implies that $\tilde{\Theta}^n_I - \tilde \Psi^n_I  = 0$ and hence $\tilde{\Theta}^n_I = \tilde \Psi^n_I$ is of the form (\ref{eq:congruent-form-rI}), showing the statement of Theorem \ref{thm:Chentsov-finite} for $n$-tensors on $\Mm_+(I)$.

To show the second part, let us consider for a finite set $I$ the inclusion and projection
\[
\imath_I: \Pp_+(I) \hookrightarrow \Mm_+(I), \qquad \mbox{and} \qquad \pi_I: \begin{array}{rll} \Mm_+(I) & \longrightarrow & \Pp_+(I)\\[1mm] \mu & \longmapsto & \frac{\mu}{\|\mu\|} \end{array}
\]

Evidently, $\pi_I$ is a left inverse of $\imath_I$, i.e., $\pi_I \imath_I = Id_{\Pp_+(I)}$, and by (\ref{eq:Markov-preserve}) it follows that $K_\ast$ commutes both with $\pi_I$ and $\imath_I$.

Thus, if $\{\Theta^n_I : I \mbox{ finite}\}$ is a congruent family of $n$-tensors on $\Pp_+(I)$, then
\[
{\tilde \Theta}^n_I := \pi_I^\ast \Theta^n_I
\]
yields a congruent families of $n$-tensors on $\Mm_+(I)$ and by the first part of the theorem must be of the form (\ref{eq:congruent-form-rI}). But then,
\[
\Theta^n_I = \imath_I^\ast {\tilde \Theta}^n_I = \sum_{\bf P} c_{\bf P} (\tau^n_I)|_{\Pp_+(I)},
\]
where $c_{\bf P} = a_{\bf P}(1)$. Since $(\tau^n_I)|_{\Pp_+(I)} = 0$ if ${\bf P}$ contains a singleton set, it follows that $\Theta^n_I$ is of the form (\ref{eq:congruent-form-r-PI}).
\end{proof}

\section{Congruent families on arbitrary sample spaces} \label{sec:congruent-arb}

In this section, we wish to generalize the classification result for congruent families on finite sample spaces (Theorem \ref{thm:Chentsov-finite}) to the case of arbitrary sample spaces. As it turns out, we show that even in this case, congruent families of tensor fields are algebraically generated by the canonical tensor fileds. More precisely, we have the following result. 

\begin{theorem}[Classification of congruent families] \label{thm:chentsov-classif} 
For $0 < r \leq 1$, let $(\Theta^n_{\Om;r})$ be a family of covariant $n$-tensors on $\Mm^r(\Om)$ (on $\Pp^r(\Om)$, respectively) for each measurable space $\Om$. Then the following are equivalent:
\begin{enumerate}
\item $(\Theta^n_{\Om;r})$ is a congruent family of covariant $n$-tensors of regularity $r$.
\item For each congruent Markov morphism $K: I \to \Pp(\Om)$ for a finite set $I$, we have $K_r^*\Theta^n_{\Om;r} = \Theta^n_{I;r}$.
\item $\Theta^n_{\Om;r}$ is of the form (\ref{eq:congruent-form-r}) (of the form (\ref{eq:congruent-form-r-P}), respectively) for uniquely determined continuous functions $a_{\bf P}$ (constants $c_{\bf P}$, respectively).
\end{enumerate}
\end{theorem}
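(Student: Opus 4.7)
The plan is to prove the cyclic chain $(3)\Rightarrow(1)\Rightarrow(2)\Rightarrow(3)$. The implication $(3)\Rightarrow(1)$ is Proposition \ref{prop:alg-gen}, and $(1)\Rightarrow(2)$ is immediate from Definition \ref{def:algebr-gen}, as Markov kernels $K:I\to\Pp(\Om)$ with finite $I$ are a special case. The entire content lies in $(2)\Rightarrow(3)$, which I would carry out in three steps.

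\emph{Step 1 (Reduction to finite sample spaces).} Taking $\Om$ itself finite in hypothesis (2) shows that the restriction $\{\Theta^n_{I;r}\}_{I\text{ finite}}$ is a congruent family in the finite setting. For $r=1$ this is classified by Theorem \ref{thm:Chentsov-finite}, and the classification carries over to arbitrary $r\in(0,1]$ via the diffeomorphism $\pi^{1/r}\colon\Mm^r_+(I)\to\Mm_+(I)$: this map is smooth on the open positive cone, intertwines Markov morphism actions via $K_r=\pi^r\circ K_\ast\circ\pi^{1/r}$, and satisfies $\tau^n_{I;r}=(\pi^{1/r})^\ast\tau^n_I$ by virtue of $\pi^{1/nr}=\pi^{1/n}\circ\pi^{1/r}$. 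Combined with Lemma \ref{lem:cantens-P} this yields unique continuous functions $a_{\bf P}$ (resp.\ constants $c_{\bf P}$), supported on partitions with $|P_i|\le 1/r$, such that $\Theta^n_{I;r}$ equals the finite-sample-space form of (\ref{eq:congruent-form-r}) (resp.\ (\ref{eq:congruent-form-r-P})) on every finite $I$.

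\emph{Step 2 (Candidate and rigidity reduction).} Define $\tilde\Theta^n_{\Om;r}$ on an arbitrary $\Om$ by formula (\ref{eq:congruent-form-r}) (resp.\ (\ref{eq:congruent-form-r-P})) with the coefficients from Step 1; by Proposition \ref{prop:alg-gen} this is a congruent family that automatically satisfies (2). Setting $\Phi:=\Theta^n_{\Om;r}-\tilde\Theta^n_{\Om;r}$, it suffices to show that this congruent family, which by construction vanishes on every finite $I$, also vanishes on every $\Om$. Fix $\mu_r\in\Mm^r(\Om)$ with $\mu:=\mu_r^{1/r}$ and tangent vectors $\nu_j=\phi_j\mu^r\in\Ss^r(\Om;\mu)$, $\phi_j\in L^{1/r}(\Om,\mu)$. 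For a measurable partition $\Om=A_1\dot\cup\cdots\dot\cup A_N$ with $\mu(A_i)>0$, write $I:=\{1,\dots,N\}$ and define the $\kappa$-congruent Markov kernel $K(i):=\mu|_{A_i}/\mu(A_i)$, where $\kappa(\om)=i$ for $\om\in A_i$. Setting $\lambda_r:=\sum_i\mu(A_i)^r\delta_i^r$, a direct computation from (\ref{eq:K-mu-r}) gives $K_r(\lambda_r)=\mu_r$ and shows that the image of the formal derivative $d_{\lambda_r}K_r$ consists precisely of the tangent vectors $\phi\mu^r$ with $\phi$ constant on each $A_i$. Hypothesis (2) together with $\Phi_{I;r}\equiv 0$ therefore forces $(\Phi_{\Om;r})_{\mu_r}$ to vanish on all such simple-density tangent vectors.

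\emph{Step 3 (Approximation and conclusion).} Pick a refining sequence of partitions whose generated $\sigma$-algebras $\mathcal A_N$ eventually capture each $\phi_j$ up to $\mu$-null sets. The $L^{1/r}$-martingale convergence theorem yields $\E_\mu[\phi_j\mid\mathcal A_N]\to\phi_j$ in $L^{1/r}(\Om,\mu)$, and through the isometric inclusion $L^{1/r}(\Om,\mu)\hookrightarrow\Ss^r(\Om;\mu)$ this gives convergence of the simple-density approximants to $\nu_j$ in $\Ss^r(\Om)$. Continuity of the $n$-tensor field $\Phi_{\Om;r}$ then forces $(\Phi_{\Om;r})_{\mu_r}(\nu_1,\dots,\nu_n)=0$. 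As $\mu_r$ and the $\nu_j$ were arbitrary, this yields $\Phi\equiv 0$, completing the proof. The principal technical obstacle is justifying these approximations rigorously in the Banach-lattice topology of $\Ss^r(\Om)$; this rests on the continuity and boundedness properties assembled in Section \ref{sec:prelims}, in particular on the norm estimate on the formal derivative of $K_r$ supplied by \cite[Theorem 3.3]{AJLS2015}.
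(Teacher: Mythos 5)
Your overall route is the same as the paper's: (3)$\Rightarrow$(1) via Proposition \ref{prop:alg-gen}, (1)$\Rightarrow$(2) trivially, and (2)$\Rightarrow$(3) by first classifying the restriction to finite $I$ via Theorem \ref{thm:Chentsov-finite} transported through $\pi^{1/r}$, then subtracting the algebraically generated candidate and killing the remainder with the $\kappa$-congruent kernels $K(i)=\mu|_{A_i}/\mu(A_i)$ built from finite partitions, followed by a density/continuity argument (your martingale-convergence refinement is a perfectly good way to make the paper's bare density claim explicit, and your computation that $K_r(\lambda_r)=\mu_r$ and that the image of $d_{\lambda_r}K_r$ consists of the simple-density tangent vectors is exactly what (\ref{eq:K-mu-r}) gives).

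There is, however, one genuine gap in Step 1: you assert that the coefficients are ``supported on partitions with $|P_i|\le 1/r$'' and attribute this to Lemma \ref{lem:cantens-P}, but that lemma only gives \emph{uniqueness} of the coefficients, not their vanishing. The transport via the diffeomorphism $\pi^{1/r}\colon \Mm^r_+(I)\to\Mm_+(I)$ only yields the representation $\Theta^n_{I;r}=\sum_{\bf P} a_{\bf P}(\|\mu_r^{1/r}\|)\,\tau^{\bf P}_{I;r}$ on the \emph{open} cone $\Mm^r_+(I)$, where $\tau^{\bf P}_{I;r}$ is defined for every partition; without a further argument nothing excludes blocks with $n_i>1/r$, and then $\Theta^n_{I;r}$ would fail to be of the form (\ref{eq:congruent-form-r}), which is precisely statement (3). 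The missing ingredient is the boundary argument: by hypothesis $\Theta^n_{I;r}$ is a covariant $n$-tensor field on all of $\Mm^r(I)$, hence its component functions extend continuously to measures with vanishing weights, whereas by (\ref{eq:can-tens-comp}) the components of $\tau^{n_i}_{I;r}$ behave like $|m_i|^{1/r-n_i}$ and blow up as $m_i\to 0$ when $n_i>1/r$; together with the uniqueness of Lemma \ref{lem:cantens-P} this forces $a_{\bf P}\equiv 0$ whenever some $|P_i|>1/r$ (and, in the $\Pp^r$ case, the exclusion of singleton blocks comes from restricting to the simplex, where $\tau^1$ vanishes). Once this step is inserted, your Steps 2 and 3 go through and the proof coincides with the paper's.
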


In the light of Definition \ref{def:alg-gen}, we may reformulate the equivalence of the first and the third statement as follows:

\begin{corollary}
The space of congruent families of covariant $n$-tensors on $\Mm^r(\Om)$ and $\Pp^r(\Om)$, respectively, is algebraically generated by the canonical $n$-tensors $\tau^n_{\Om;r}$ for $n \leq 1/r$.
\end{corollary}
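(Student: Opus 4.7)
The plan is to derive this corollary as a direct corollary of Theorem \ref{thm:chentsov-classif}, specifically from the equivalence of statements (1) and (3). The content of the corollary is essentially a linguistic repackaging: "algebraically generated by the canonical tensors" is the ambient terminology introduced in Definition \ref{def:alg-gen} for precisely the families having the shape (\ref{eq:congruent-form-r}) on $\Mm^r(\Om)$ and (\ref{eq:congruent-form-r-P}) on $\Pp^r(\Om)$. So the bulk of the work has already been done inside the theorem; what remains is to check that the dictionary lines up correctly.

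First I would observe that in the formulas (\ref{eq:congruent-form-r}) and (\ref{eq:congruent-form-r-P}), the sums range over partitions ${\bf P} = \{P_1,\ldots,P_l\}$ whose blocks satisfy $|P_i| \leq 1/r$ (with the additional $|P_i| > 1$ in the probability case). Via (\ref{eq:cantens-P-Om}), each tensor $\tau^{\bf P}_{\Om;r}$ is built as a pointwise tensor product of canonical tensors $\tau^{n_i}_{\Om;r}$ with $n_i = |P_i| \leq 1/r$. Hence the $\tau^{\bf P}_{\Om;r}$ appearing in (\ref{eq:congruent-form-r}) and (\ref{eq:congruent-form-r-P}) are precisely the generators, under tensor product and permutation, coming from the set $\{\tau^m_{\Om;r} : m \leq 1/r\}$, which matches exactly the indexing in the corollary.

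Next I would invoke Theorem \ref{thm:chentsov-classif} to obtain the two-way inclusion. The forward direction $(1) \Rightarrow (3)$ shows that every congruent family is a sum of the stated algebraic form, i.e.\ lies in the algebra generated by $\{\tau^m_{\Om;r} : m \leq 1/r\}$. The converse inclusion is handled by Proposition \ref{prop:alg-gen}, which already asserts that every family written in the form (\ref{eq:congruent-form-r}) or (\ref{eq:congruent-form-r-P}) is congruent; equivalently, this is $(3) \Rightarrow (1)$. Combining these gives the set-theoretic equality of "congruent families" and "families algebraically generated by the $\tau^m_{\Om;r}$ with $m \leq 1/r$".

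The only mildly subtle point, which I would flag explicitly rather than obscure, is the uniqueness clause hidden in (3): the coefficients $a_{\bf P}$ (respectively $c_{\bf P}$) are uniquely determined, so the passage from the family to its algebraic expression is well-defined, and in that sense the space of congruent families carries a canonical $\F$-module structure isomorphic to the span of the $\tau^{\bf P}_{\Om;r}$ (tensored with continuous functions of $\|\mu_r^{1/r}\|$ in the case of $\Mm^r(\Om)$). I expect no genuine obstacle: once the notational identifications above are made, the corollary is immediate from Theorem \ref{thm:chentsov-classif} and Definition \ref{def:alg-gen}, and the proof should be no more than a couple of lines.
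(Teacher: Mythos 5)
Your proposal is correct and matches the paper exactly: the corollary is stated there as a direct reformulation of the equivalence of statements (1) and (3) of Theorem \ref{thm:chentsov-classif} in the terminology of Definition \ref{def:alg-gen}, which is precisely your argument. Your additional checks (that the blocks $|P_i|\leq 1/r$ correspond to the generators $\tau^m_{\Om;r}$ with $m\leq 1/r$, and that Proposition \ref{prop:alg-gen} supplies the converse inclusion) are exactly the implicit content the paper relies on.
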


\begin{proof}[Proof of Theorem \ref{thm:chentsov-classif}.]
We already showed in Proposition \ref{prop:alg-gen} that the tensors (\ref{eq:congruent-form-r}) and (\ref{eq:congruent-form-r-P}), respectively, are congruent families, hence the third statement implies the first. The first immediately implies the second by the definition of the congruency of tensors. Thus, it remains to show that the second statement implies the third.

We shall give the proof only for the families $(\Theta_{\Om;r}^n)$ of covariant $n$-tensors on $\Mm^r(\Om)$, as the proof for families on $\Pp^r(\Om)$ is analogous.

Observe that for finite sets $I$, the space $\Mm^r_+(I) \subset \Ss^r(I)$ is an open subset and hence a manifold, and the restrictions $\pi^\alpha: \Mm^r_+(I) \to \Mm_+^{r\alpha}(I)$ are diffeomorphisms not only for $\alpha \geq 1$ but for {\em all }$\alpha > 0$. Thus, given the congruent family $(\Theta^n_{\Om;r})$, we define for each finite set $I$ the tensor
\[
\Theta^n_I := (\pi^r)^*\Theta^n_{I;r} \quad \mbox{on $\Mm_+(I)$}.
\]

Then for each congruent Markov kernel $K: I \to \Pp(J)$ with $I$, $J$ finite we have
\begin{eqnarray*}
K^*\Theta^n_J & = & K^*(\pi^r)^*\Theta^n_{J;r} = (\pi^r K_*)^*\Theta^n_{J;r}\\
& \stackrel{(\ref{def:K*r})}= & (K_r \pi^r)^*\Theta^n_{J;r} = (\pi^r)^* K_r^* \Theta^n_{J;r} = (\pi^r)^*\Theta^n_{I;r}\\
& = & \Theta^n_I.
\end{eqnarray*}

Thus, the family $(\Theta^n_I)$ on $\Mm_+(I)$ is a congruent family of covariant $n$-tensors on finite sets, whence by Theorem \ref{thm:Chentsov-finite}
\[
(\Theta^n_I)_\mu = \sum_{{\bf P} \in {\bf Part}(n)} a_{\bf P}(\|\mu\|) (\tau^{\bf P}_I)_\mu
\]
for uniquely determined functions $a_{\bf P}$, whence on $\Mm_+^r(I)$,
\begin{eqnarray*}
\Theta^n_{I;r} & = & (\pi^{1/r})^*\Theta^n_I\\
& = & \sum_{{\bf P} \in {\bf Part}(n)} a_{\bf P}(\|\mu_r^{1/r}\|) (\pi^{1/r})^*\tau^{\bf P}_I\\
& = & \sum_{{\bf P} \in {\bf Part}(n)} a_{\bf P}(\|\mu_r^{1/r}\|) \tau^{\bf P}_{I;r}.
\end{eqnarray*}

By our assumption, $\Theta^n_{I;r}$ must be a covariant $n$-tensor on $\Mm(I)$, whence it must extend continuously to the boundary of $\Mm_+(I)$.

But by (\ref{eq:can-tens-comp}) it follows that $\tau^{n_i}_{I;r}$ has a singularity at the boundary of $\Mm(I)$, unless $n_i \leq 1/r$. From this it follows that $\Theta^n_{I;r}$ extends to all of $\Mm(I)$ if and only if $a_{\bf P} \equiv 0$ for all partitions ${\bf P} = \{P_1, \ldots, P_i\}$ where $|P_i| > 1/r$ for some $i$.

Thus, $\Theta^n_{I;r}$ must be of the form (\ref{eq:congruent-form-r})
for all finite sets $I$. Let
\[
\Psi^n_{\Om;r} := \Theta^n_{\Om;r}- \sum_{\bf P} a_{\bf P}(\|\mu_r^{1/r}\|) \tau^n_{\Om;r}
\]
for the previously determined functions $a_{\bf P}$, so that $(\Psi^n_{\Om;r})$ is a congruent family of covariant $n$-tensors, and $\Psi^n_{I;r} = 0$ for every finite $I$.

We assert that this implies that $\Psi^n_{\Om;r} = 0$ for {\em all }$\Om$, which shows that $\Theta^n_{\Om;r}$ is of the form (\ref{eq:congruent-form-r}) for all $\Om$, which will complete the proof.

To see this, let $\mu_r \in \Mm^r(\Om)$ and $\mu := \mu_r^{1/r} \in \Mm(\Om)$. Moreover, let $V_j = \phi_j \mu_r \in \Ss^r(\Om, \mu_r)$, $j = 1, \ldots, n$, such that the $\phi_j$ are step functions. That is, there is a finite partition $\Om = \dot \bigcup_{i \in I} \Om_i$ such that
\[
\phi_j = \sum_{i \in I} \phi_j^i \chi_{\Om_i}
\]
for $\phi_j^i \in \R$ and $m_i := \mu(\Om_i) > 0$.

Let $\kappa: \Om \to I$ be the statistic $\kappa(\Om_i) = \{i\}$, and $K: I \to \Pp(\Om)$, $K(i) := 1/m_i \chi_{\Om_i} \mu$. Then clearly, $K$ is $\kappa$-congruent, and $\mu = K_*\mu'$ with $\mu' := \sum_{i \in I} m_i \delta_i \in \Mm_+(I)$. Thus, by (\ref{eq:K-mu-r})
\[
d_{\mu'}K_r\left(\sum_{i \in I} \phi_j^i m_i^r \delta_i^r\right) = \sum_{i \in I} \phi_j^i \chi_{\Om_i} \mu_r = \phi_j \mu_r = V_j,
\]
whence if we let $V_j' := \sum_{i \in I} \phi_j^i m_i^r \delta_i^r \in \Ss^r(I)$, then
\begin{eqnarray*}
\Psi_{\Om;r}^n(V_1, \ldots, V_n) & = & \Psi_{\Om;r}^n(dK_r(V'_1), \ldots, dK_r(V'_n))\\
& = & K_r^* \Psi_{\Om;r}^n (V'_1, \ldots, V'_n)\\
& = & \Psi_{I;r}^n (V'_1, \ldots, V'_n) = 0,
\end{eqnarray*}
since by the congruence of the family $(\Psi_{\Om;r}^n)$ we must have $K_r^* \Psi_{\Om;r}^n = \Psi_{I;r}^n$, and $\Psi_{I;r}^n = 0$ by assumption as $I$ is finite.

That is, $\Psi_{\Om;r}^n(V_1, \ldots, V_n) = 0$ whenever $V_j = \phi_j \mu_r \in \Ss^r(\Om, \mu_r)$ with step functions $\phi_j$. But the elements $V_j$ of this form are dense in $\Ss^r(\Om, \mu_r)$, hence the continuity of $\Psi_{\Om;r}^n$ implies that $\Psi_{\Om;r}^n = 0$ for all $\Om$ as claimed.
\end{proof}

As two special cases of this result, we obtain the following.

\begin{corollary}[Generalization of Chentsov's theorem]   \label{cor:chentsov-general}
\begin{enumerate}
\item
Let $(\Theta_\Om^2)$ be a congruent family of $2$-tensors on $\Pp^{1/2}(\Om)$. Then up to a constant, this family is the Fisher metric. That is, there is a constant $c \in \R$ such that for all $\Om$,
\[
\Theta_\Om^2 = c\; \g_F.
\]
In particular, if $(M, \Om, \pb)$ is a $2$-integrable statistical model, then
\[
\pb^*\Theta_\Om^2 = c\; \g_M
\]
is -- up to a constant -- the Fisher metric of the model.
\item
Let $(\Theta_\Om^3)$ be a congruent family of $3$-tensors on $\Pp^{1/3}(\Om)$. Then up to a constant, this family is the Amari--Chentsov tensor. That is, there is a constant $c \in \R$ such that for all $\Om$,
\[
\Theta_\Om^3 = c\; \T.
\]
In particular, if $(M, \Om, \pb)$ is a $3$-integrable statistical model, then
\[
\pb^*\Theta_\Om^3 = c\; \T_M
\]
is -- up to a constant -- the Amari--Chentsov tensor of the model.
\end{enumerate}
\end{corollary}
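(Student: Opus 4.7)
The plan is to derive both parts of the corollary as direct specializations of Theorem \ref{thm:chentsov-classif} to the cases $(n,r) = (2, 1/2)$ and $(n,r) = (3, 1/3)$, in the probability-measure setting. The key observation is that in the representation (\ref{eq:congruent-form-r-P}) for congruent families on $\Pp^r(\Om)$, the sum ranges over partitions ${\bf P} = \{P_1,\ldots,P_l\}$ of $\{1,\ldots,n\}$ subject to $1 < |P_i| \leq 1/r$ for every block. Whenever $1/r = n$, these two bounds pin the allowed block sizes between $2$ and $n$, forcing every admissible partition to collapse to the trivial one $\{\{1,\ldots,n\}\}$.

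For part (1) I would apply Theorem \ref{thm:chentsov-classif} with $n = 2$ and $r = 1/2$. The only two partitions of $\{1,2\}$ are $\{\{1\},\{2\}\}$ and $\{\{1,2\}\}$; the first is excluded because its blocks are singletons, leaving the trivial partition as the unique contributor. Hence (\ref{eq:congruent-form-r-P}) collapses to
\[
\Theta^2_{\Om;1/2} \;=\; c\, \tau^2_{\Om;1/2}
\]
for a single constant $c \in \R$, uniform in $\Om$ by the uniqueness clause. For a $2$-integrable statistical model $(M,\Om,\pb)$, pulling back by $\pb^{1/2}$ and invoking Definition \ref{def:tau-n-Omr} together with formula (\ref{eq:tau-n-plog}) identifies $(\pb^{1/2})^*\tau^2_{\Om;1/2}$ with the Fisher metric $\g_M$, yielding $\pb^*\Theta^2_\Om = c\, \g_M$.

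Part (2) follows by the same argument with $n = 3$ and $r = 1/3$. Among the five partitions of $\{1,2,3\}$, only the trivial partition $\{\{1,2,3\}\}$ has all blocks of size $\geq 2$; the three partitions of type $\{\{i,j\},\{k\}\}$ and the all-singleton partition each carry a forbidden singleton block. Consequently $\Theta^3_{\Om;1/3} = c\,\tau^3_{\Om;1/3}$, and pulling back under $\pb^{1/3}$ in a $3$-integrable statistical model gives $\pb^*\Theta^3_\Om = c\,\T_M$ via formula (\ref{eq:tau-n-plog}).

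Since Theorem \ref{thm:chentsov-classif} is assumed in force, no serious obstacle remains; the corollary is essentially a combinatorial tautology about low-degree partitions. The only point worth flagging is that this clean ``one constant'' statement is specific to $n \leq 3$: starting from $n = 4$, the partition $\{\{1,2\},\{3,4\}\}$ (and its relatives) satisfies $1 < |P_i| \leq n$ and so contributes a genuine extra tensor-product term, reflecting the richer algebraic structure described by Proposition \ref{prop:generated-algebra}.
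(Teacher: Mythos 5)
Your proposal is correct and coincides with the paper's (implicit) argument: the corollary is stated there precisely as the specialization of Theorem \ref{thm:chentsov-classif} to $(n,r)=(2,1/2)$ and $(3,1/3)$, where the constraint $1<|P_i|\leq 1/r$ in (\ref{eq:congruent-form-r-P}) leaves only the trivial partition, so $\Theta^n_{\Om;1/n}=c\,\tau^n_{\Om;1/n}$ with one constant, and pulling back by $\pb^{1/n}$ gives $c\,\g_M$ resp.\ $c\,\T_M$ via (\ref{eq:tau-n-plog}). Your closing remark about $n=4$ likewise matches the discussion following the corollaries in the paper.
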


\begin{corollary}[Generalization of Campbell's theorem] \label{cor:campbell-general}
Let $(\Theta_\Om^2)$ be a congruent family of $2$-tensors on $\Mm^{1/2}(\Om)$. Then there are continuous functions $a, b: (0, \infty) \to \R$ such that
\[
(\Theta_\Om^2)_{\mu^{1/2}} (V_1, V_2) = a(\|\mu\|) \g_F(V_1, V_2) + b(\|\mu\|) \tau^1_{\Om;1/2}(V_1) \tau^1_{\Om;1/2}(V_2).
\]

In particular, if $(M, \Om, \pb)$ is a $2$-integrable parametrized measure model, then
\begin{eqnarray*}
\pb^*(\Theta_\Om^2)_\xi(V_1, V_2) & = & a(\|\pb(\xi)\|) \int_\Om \partial_{V_1} \log \pb(\xi)\; \partial_{V_2} \log \pb(\xi)\; d\pb(\xi)\\
& & \qquad \qquad + b(\|\pb(\xi)\|) \left(\partial_{V_1} \|\pb(\xi)\| \right) \left( \partial_{V_2} \|\pb(\xi)\| \right).
\end{eqnarray*}
\end{corollary}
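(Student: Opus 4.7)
The plan is to derive Corollary \ref{cor:campbell-general} as a direct specialization of Theorem \ref{thm:chentsov-classif} to $n=2$, $r=1/2$. First I would enumerate all partitions $\mathbf{P} \in \mathbf{Part}(2)$ subject to the constraint $|P_i| \leq 1/r = 2$. Since every block of every partition of $\{1,2\}$ has at most two elements, exactly two partitions contribute: $\mathbf{P}_1 = \{\{1,2\}\}$ and $\mathbf{P}_2 = \{\{1\},\{2\}\}$. By (\ref{eq:cantens-P-Om}), $\tau^{\mathbf{P}_1}_{\Om;1/2} = \tau^2_{\Om;1/2}$, while $\tau^{\mathbf{P}_2}_{\Om;1/2}(V_1,V_2) = \tau^1_{\Om;1/2}(V_1)\,\tau^1_{\Om;1/2}(V_2)$.

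Applying Theorem \ref{thm:chentsov-classif} therefore produces continuous functions $a := a_{\mathbf{P}_1}$ and $b := a_{\mathbf{P}_2}$ on $(0,\infty)$ such that
\[
(\Theta^2_\Om)_{\mu^{1/2}}(V_1,V_2) = a(\|\mu\|)\,\tau^2_{\Om;1/2}(V_1,V_2) + b(\|\mu\|)\,\tau^1_{\Om;1/2}(V_1)\,\tau^1_{\Om;1/2}(V_2),
\]
which is the first assertion once one identifies $\g_F$ with the canonical $2$-tensor $\tau^2_{\Om;1/2}$ on $\Mm^{1/2}(\Om)$. The uniqueness of $a,b$ is automatic from the uniqueness part of Theorem \ref{thm:chentsov-classif}.

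For the second assertion, I would pull back along $\pb^{1/2}$, which is Fr\'echet-differentiable precisely because the model is $2$-integrable. The compatibility identity (\ref{eq:pullback-homom}) reduces the task to computing $(\pb^{1/2})^*\tau^2_{\Om;1/2}$ and $(\pb^{1/2})^*\tau^1_{\Om;1/2}$. By (\ref{eq:tau-n-p}) and the integral representation (\ref{eq:tau-n-plog}) with $n=2$, the first pull-back equals the Fisher integral $\int_\Om \p_{V_1}\log\pb(\xi)\,\p_{V_2}\log\pb(\xi)\,d\pb(\xi)$. For the second, the example following Definition \ref{def:tau-n-Omr} identifies $\tau^1_{(M,\Om,\pb)}(V) = \p_V\|\pb(\xi)\|$, so the tensor product term becomes $(\p_{V_1}\|\pb(\xi)\|)(\p_{V_2}\|\pb(\xi)\|)$. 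Substituting these two computations yields the stated pull-back formula.

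There is no substantive obstacle here: the heavy lifting is already done in Theorem \ref{thm:chentsov-classif}. The only delicate point is the bookkeeping on partition types and the verification that both admissible partitions survive the bound $|P_i| \leq 1/r$; in particular, the singleton partition $\mathbf{P}_2$ does contribute (so one cannot restrict to partitions with blocks of size $\geq 2$ as in the $\Pp^r$-case of (\ref{eq:congruent-form-r-P})), which is the origin of the extra $b$-term in Campbell's formula compared with the probability-measure case treated in Corollary \ref{cor:chentsov-general}.
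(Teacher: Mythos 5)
Your proposal is correct and follows exactly the route the paper intends: Corollary \ref{cor:campbell-general} is stated as an immediate specialization of Theorem \ref{thm:chentsov-classif} with $n=2$, $r=1/2$ on $\Mm^{1/2}(\Om)$, where the two partitions of $\{1,2\}$ give the $a$-term (canonical $2$-tensor, i.e.\ the Fisher metric) and the $b$-term (product of canonical $1$-tensors), and the pull-back formula follows from (\ref{eq:tau-n-plog}) and the identity $\tau^1_{(M,\Om,\pb)}(V)=\p_V\|\pb(\xi)\|$. Your remark that the singleton blocks survive the bound $|P_i|\leq 1/r$ on $\Mm^r$ (unlike the $\Pp^r$ case) correctly pinpoints the origin of the extra term compared with Corollary \ref{cor:chentsov-general}.
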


While the above results show that for small $n$ there is a unique family of congruent $n$-tensors, this is no longer true for larger $n$. For instance, for $n = 4$ Theorem \ref{thm:chentsov-classif} implies that any restricted congruent family of invariant $4$-tensors on $\Pp^r(\Om)$, $0 < r \leq 1/4$, is of the form
\begin{eqnarray*}
\Theta_\Om^4(V_1, \ldots, V_4) & = & c_0 \tau^4_{\Om;r}(V_1, \ldots, V_4)\\
& & + c_1 \tau^2_{\Om;r}(V_1, V_2) \tau^2_{\Om;r}(V_3, V_4)\\
& & + c_2 \tau^2_{\Om;r}(V_1, V_3) \tau^2_{\Om;r}(V_2, V_3)\\
& & + c_3 \tau^2_{\Om;r}(V_1, V_4) \tau^2_{\Om;r}(V_2, V_4),
\end{eqnarray*}
so that the space of congruent families on $\Pp^r(\Om)$ is already $4$-dimensional in this case. Evidently, this dimension rapidly increases with $n$.



\begin{thebibliography}{99}

\bibitem{Amari1980}{\sc  S. Amari}, Theory of information spaces. A geometrical foundation of statistics. POST RAAG Report 106 (1980).

\bibitem{Amari1982}{\sc S. Amari}, Differential geometry of curved exponential families curvature and information loss. The Annals of Statistics, 10, 357-385  (1982).

\bibitem{AN2000}{\sc S.\ Amari and H.\ Nagaoka}, Methods of information geometry, Translations of mathematical monographs; v.\ 191,
American Mathematical Society, Providence, RI; Oxford University Press, Oxford (2000).

\bibitem{AJLS}{\sc N.\ Ay, J.\ Jost, H.V.\ L\^e, L. Schwachh\"ofer},
  Information geometry and sufficient statistics, Probability Theory and Related Fields 162, 327--364 (2015).

\bibitem{AJLS2015}{\sc N.\ Ay, J.\ Jost, H.V.\ L\^e, L. Schwachh\"ofer}, Parametrized measure models, Bernoulli (to appear), arXiv:1510.07305, (2015).

\bibitem{AJLS2017}{\sc N.\ Ay, J.\ Jost, H.V.\ L\^e, L. Schwachh\"ofer}, Information geometry, Ergebnisse der Mathematik und ihrer Grenzgebiete, Springer (to appear).

\bibitem{BBM}{\sc M. Bauer, M. Bruveris, P. Michor}, Uniqueness of the Fisher-Rao metric on the space of smooth densities, Bull.Lond.Math.Soc. 48, no. 3, 499--506 (2016).

\bibitem{BBM2}{\sc M. Bauer, M. Bruveris, P. Michor}, Presentation at the fourth Conference on Information Geometry ind Its Applications (IGAIA IV, 2016), Liblice, Czech Republic.

\bibitem{Campbell1986}{\sc L.L.\ Campbell}, An extended  Chentsov characterization of a Riemannian metric, Proceedings of  Amer.\  Math.\ Soc.,  98 135--141 (1986).

\bibitem{Chentsov1965} {\sc N. Chentsov}, Category of  mathematical statistics, Dokl. Acad. Nauk USSR 164, 511-514 (1965).

\bibitem{Chentsov1978}{\sc N. Chentsov}, Algebraic foundation of mathematical statistics,  Math. Operationsforsch. statist. Serie Statistics. v.9, 267-276 (1978).

\bibitem{Chentsov1982}{\sc N.\ Chentsov}, Statistical decision rules
  and optimal inference,  Moscow,  Nauka,  1972 (in Russian),   English translation  in:  Translation of Math.\ Monograph 53, American Mathematical Society, Providence, RI (1982).

\bibitem{Dowty} {\sc J. Dowty}, Chentsov's theorem for exponential families, arXiv:1701.08895

\bibitem{Efron1975}{\sc B. Efron}, Defining the curvature of a statistical problem (with applications to second order efficiency), with a discussion by C. R. Rao, Don A. Pierce, D. R. Cox, D. V. Lindley, Lucien LeCam, J. K. Ghosh, J. Pfanzagl, Niels Keiding, A. P. Dawid, Jim Reeds and with a reply by the author, Ann. Statist. 3,  1189-1242 (1975).

\bibitem{Jeffreys1946} {\sc H. Jeffreys}, An invariant form for the prior probability in estimation problems, Proc. Roy. Soc. London. Ser. A. 186,  453-461 (1946).

\bibitem{CRa}{\sc J.\ Jost, H.V.\ L\^e, L. Schwachh\"ofer}, The Cram\'er-Rao inequality on singular statistical models I, arXiv: 1703.09403 (2017)

\bibitem{Le2016}{\sc H.V. L\^e}, The uniqueness of the Fisher metric as information metric, AISM,  DOI:10.1007/s10463-016-0562-0,   arXiv:math/1306.1465.

\bibitem{MC1991}{\sc  E.\ Morozova and N.\ Chentsov},  Natural geometry on families of  probability laws, Itogi Nauki i Techniki, Current
  problems of mathematics, Fundamental directions  83, Moscow, 133--265 (1991).

\bibitem{Rao1945}{\sc C. R. Rao}, Information and the accuracy attainable in the estimation of statistical parameters, Bulletin of the Calcutta Mathematical Society 37, 81-89 (1945).


\end{thebibliography}
\end{document}